\newtheorem{theorem}{Theorem}[section]
\newtheorem{lemma}[theorem]{Lemma}
\newtheorem{corollary}[theorem]{Corollary}
\theoremstyle{definition}
\theoremstyle{remark}
\newtheorem{remark}[theorem]{Remark}
\newcommand{\x}{\bar{x}}
\newcommand{\y}{\bar{y}}
\newcommand{\s}{\tilde{v}}
\newcommand{\e}{\varepsilon}
\newcommand{\R}{\mathbb{R}}
\newcommand{\0}{\textbf{0}}
\numberwithin{equation}{section}
\DeclareMathOperator{\cexp}{c-exp}
\begin{document}

\title{\textbf{Boundary $\varepsilon$-regularity in optimal transportation}}

\author{Shibing Chen, Alessio Figalli}
\address{Department of mathematics, Zhejiang University of Technology, Hangzhou 310023, China}
\address{Mathematical Science Institute,
The Australian National University,
Canberra  ACT  2601}
\email{chenshibing1982@hotmail.com}

\address{Department of Mathematics, The University of Texas at Austin, 1 University Station C1200, Austin
TX 78712, USA}
\email{figalli@math.utexas.edu}
\thanks{}


\date{February 10}

\dedicatory{}

\keywords{}

 \begin{abstract}
 We develop an $\e$-regularity theory at the boundary for a general class of Monge-Amp\`ere type
 equations arising in optimal transportation. As a corollary we deduce that optimal transport maps
between H\"older densities supported on $C^2$ uniformly convex domains are $C^{1,\alpha}$ up to the boundary,  provided that the cost function is a sufficient small perturbation of the quadratic cost $-x\cdot y$.
 \end{abstract}

 \maketitle

 \section{Introduction}
 Let $f$ and $g$ be two probability densities supported respectively on
 two bounded domains $X$ and $Y$ in $\mathbb{R}^n$.
 Let
 $c: X\times Y \rightarrow \mathbb{R}$ be a cost function.
 The optimal transport
problem is about finding a  map $T:X \to Y$ among all transport maps minimizing the transportation cost
$$\int_{X} c(x, T(x)) f(x)dx,$$
 where the term ``transport map'' means $T_\sharp f=g$.  Existence and uniqueness of optimal transport maps under mild conditions are now 
 well understood, see for instance \cite{C1} and \cite{GM}. 
 The regularity theory of optimal transport map with quadratic cost $c(x,y)=-x\cdot y$ \footnote{Usually
 one refers to $c(x,y)=|x-y|^2/2$ as ``quadratic cost''. However it is well-known that the costs $|x-y|^2/2$ and $-x\cdot y$ are completely equivalent (see for instance \cite[Section 3.1]{DFreview}), thus we will use the term ``quadratic cost'' to refer to the latter.} has been developed by Caffarelli  \cite{C90,C902,C91,C92,C96} (see also \cite{U}).
 Since this paper is concerned with the regularity theory up to the boundary, we state here Caffarelli's global regularity result:
 
 \begin{theorem} \cite{C96} \label{t0}
 Suppose that $0<f\in C^{\alpha}(\overline X)$ and $0<g\in C^{\alpha}(\overline Y)$, where
 $X$ and $Y$ are uniformly convex bounded domains of class $C^2$. Then the optimal transport map $T$ associated to the cost $c(x,y)=-x\cdot y$ is of class $C^{1,\alpha}$ up to 
 the boundary of $X.$
 \end{theorem}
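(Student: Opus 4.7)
The plan is to follow Caffarelli's strategy for the second boundary value problem of the Monge--Amp\`ere equation. By Brenier's theorem, $T=\nabla u$ for a convex potential $u:X\to\mathbb{R}$, and the mass balance $T_\sharp f=g$ together with $\nabla u(X)=Y$ gives
\begin{equation*}
\det D^2 u(x)=\frac{f(x)}{g(\nabla u(x))}\quad\text{in }X.
\end{equation*}
Caffarelli's interior theory already yields $u\in C^{2,\alpha}_{\mathrm{loc}}(X)$, so the task reduces to upgrading this to $u\in C^{2,\alpha}(\overline{X})$, which is equivalent to $T\in C^{1,\alpha}(\overline{X})$.

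The first and most delicate step is strict convexity of $u$ up to the boundary. Given $x_0\in\partial X$ and an affine supporting function $\ell_{x_0}$ at $x_0$, I would consider the boundary sections
\begin{equation*}
S_h(x_0):=\{x\in\overline{X}:u(x)<\ell_{x_0}(x)+h\}
\end{equation*}
and show that $\mathrm{diam}\,S_h(x_0)\to 0$ as $h\to 0^+$. The argument is by contradiction: a failure of strict convexity, after blow-up at $x_0$, yields a convex function on a half-space whose subdifferential collapses a segment or half-line to a single point, so the Brenier image misses an open portion of $Y$. Using the uniform convexity of $Y$, one constructs an explicit quadratic barrier that pushes the image out of $Y$, contradicting $T_\sharp f=g$.

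The core geometric step is then to control the \emph{shape} of $S_h(x_0)$. By John's lemma, each such section is comparable to a half-ellipsoid; after the normalizing affine change of variables, the rescaled potential satisfies a Monge--Amp\`ere equation with two-sided bounded right-hand side on a domain close to a half-ball. The goal is to show that the John ellipsoids have eccentricities bounded \emph{uniformly} in $h$, so that $S_h(x_0)$ is in fact comparable to a half-ball of radius $\sim\sqrt{h}$. I expect this to be the main technical obstacle: it requires a delicate simultaneous use of the uniform convexity of both $X$ and $Y$, building barriers that prevent the normalizing affine maps from degenerating in directions tangent to $\partial X$, and leans heavily on the $C^2$ assumption on the boundaries.

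Once good shape of the boundary sections is established, the oscillation of $\nabla u$ on $S_h(x_0)$ is bounded by $Ch^{1/2}$, which by a standard covering and rescaling argument, combined with Caffarelli's interior pointwise $C^{1,\alpha}$ estimate, yields $T=\nabla u\in C^{\beta}(\overline{X})$ for some $\beta>0$. At this point $f(\cdot)/g(\nabla u(\cdot))$ is H\"older on $\overline{X}$ and $\det D^2 u$ is two-sidedly bounded, so that after linearization the equation becomes uniformly elliptic with H\"older coefficients; global Schauder estimates up to $\partial X$ (using $\partial X\in C^2$) finally upgrade $u$ to $C^{2,\alpha}(\overline{X})$, which is the announced $C^{1,\alpha}$ regularity of $T$.
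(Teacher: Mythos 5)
Theorem \ref{t0} is not proved in this paper at all: it is quoted verbatim from Caffarelli \cite{C96}, so your proposal has to be measured against Caffarelli's original argument rather than anything in the present text. Measured that way, what you have written is a correct table of contents for that argument, not a proof. The heart of \cite{C96} is exactly the step you flag as ``the main technical obstacle'': showing that the John ellipsoids of the boundary sections $S_h(x_0)$ have eccentricity bounded uniformly in $h$ and in $x_0\in\partial X$, so that $S_h(x_0)$ is comparable to a half-ball of radius $\sqrt h$. Asserting that this ``requires a delicate simultaneous use of the uniform convexity of both $X$ and $Y$'' names the difficulty without resolving it; the barriers, the localization of $\partial u$, and the preliminary facts you tacitly use (that $u$ extends to a globally convex function with $\partial u(\overline X)\subset \overline Y$, that mass balance plus convexity of $Y$ forces boundary points to be mapped to boundary points, and the obliqueness-type estimate $|\nabla u(x)-x_0|\gtrsim$ distance controls) all have to be established before the blow-up/contradiction scheme for strict convexity can even be set up. So the proposal as it stands has a genuine gap precisely where the theorem is hard.

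The final step is also not right as described. Once $T=\nabla u\in C^{\beta}(\overline X)$, you cannot simply ``linearize and apply global Schauder estimates up to $\partial X$'': the potential solves the second boundary value problem $\nabla u(X)=Y$, not a Dirichlet problem, so there is no boundary datum of the form needed for classical global Schauder theory, and recasting the constraint as an oblique condition $h(\nabla u)=0$ on $\partial X$ requires both an obliqueness estimate and more boundary regularity than is available at that stage (this is the Trudinger--Wang/Urbas route, which needs smoother data). Caffarelli instead obtains pointwise $C^{2,\alpha}$ at boundary points by an iteration in which $u$ is compared, on each rescaled boundary section, with solutions of $\det D^2 v=\mathrm{const}$ on a convexified (and, in spirit, symmetrized) copy of the section with controlled boundary behavior---exactly the perturbation scheme that the present paper adapts to general costs in Lemma \ref{l3} and the proof of Theorem \ref{t2}. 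Interior $C^{2,\alpha}$ plus the boundary pointwise expansion is then glued by a covering argument. If you want a complete proof, you should either carry out the section-shape estimates and this comparison iteration, or cite \cite{C96} as the paper does.
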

 
 For general costs, Ma, Trudinger, and Wang \cite{MTW} found the so called ``MTW condition'', which guarantees the smoothness of the optimal transport map
 provided the densities are smooth and the domains satisfy some suitable convexity conditions. Their condition reads as follows:
 \begin{equation}\label{mtw}
\underset{i,j,k,l,p,q,r,s}{ \sum}c^{p,q}(c_{ij,p}c_{q,rs}-c_{ij,rs})c^{r,k}c^{s,l}\xi_i\xi_j\eta_k\eta_l\geq 0 \qquad \text{in }X\times Y
 \end{equation}
 for all $\xi,$ $\eta\in \mathbb{R}^n$ satisfying $\xi\perp\eta$, where
lower indices before (resp. after) the comma indicates derivatives with respect to $x$ (resp. $y$)
(so for instance  $c_{i,j}=\frac{\partial^2 c}{\partial x_i\partial y_j}$), $(c^{i,j})$ is the inverse 
 of $(c_{i,j})$, and all derivatives are evaluated at $(x,y) \in X\times Y$.
  Later, Loeper \cite{L1} showed that MTW condition is actually a 
 necessary condition for the optimal transport map to be smooth for any positive smooth densities. After the breakthroughs  in \cite{MTW,L1}, 
 many works have been devoted to the regularity theory of optimal transport map under MTW condition, to cite some see \cite{FL,Liu,TW1,TW2,FR,Loe2,LV,LTW,FRV-surfaces,KMC1,FRV-nec,FRV,FKM,FKM-spheres}.

 By now, regularity of optimal transport maps under the MTW condition is well understood. 
 However, several interesting costs do not satisfy this condition, for instance $c(x,y)=\frac{1}{p}|x-y|^p$ does not satisfy MTW condition
 when $p\in (1,2)\cup(2,\infty)$, and actually the class of costs satisfying the MTW condition is very restricted.
 
 Recently,  De Philippis and Figalli \cite{DF} obtained a partial regularity result for optimal transport problem with general cost
without assuming neither the MTW condition nor any convexity on the domains.
The key result in \cite{DF} consists in the interior versions of
 our Theorems \ref{t1} and \ref{t2}. 
 Roughly speaking they prove that, given any $0<\beta<1$, if there are interior points $x_0 \in X,y_0 \in Y$ such that the cost function is sufficiently close $-x\cdot y$ in $C^2$ near $(x_0,y_0)$, the densities are 
sufficiently close to $1$ in $C^0$ (resp. the densities are $C^\alpha$)  near $x_0$ and $y_0$ respectively, and the potential function $u$ is sufficiently close to $\frac12 |x|^2$ in $C^0$ near $x_0$,
  then $u$ is $C^{1,\beta}$ (resp. $C^{2,\alpha}$) in a neighbourhood of $x_0$.
  Exploiting that   semiconvex functions are twice differentiable almost everywhere,
  De Philippis and Figalli used this result to show that optimal maps are always smooth outside a closed set of measure zero (see the proof of \cite[Theorem 1.3]{DF} for more details). 
  
  In this paper we prove the analogue of De Philippis and Figalli's result when $x_0$ is on the boundary
  of $X$. As an application  we show that 
optimal transport maps
between H\"older densities supported on $C^{2,\alpha}$ uniformly convex domains are  $C^{1,\alpha}$ in the interior and $C^{1,\alpha'}$ up to the boundary for some $\alpha' \in (0,\alpha)$,  provided that the cost function is a sufficient small perturbation of the quadratic cost $-x\cdot y$.
This allows us to improve a recent result of Caffarelli, Gonz\'{a}les, and Nguyen \cite{CGN}, where they prove that the optimal transport map is of class $C^{1,\alpha}$ {\em strictly} in the interior of $X$ (more precisely, as the size of the perturbation on the cost goes to zero, the transport map is $C^{1,\alpha}$ in larger and larger domains which invade $X$, see \cite[Theorem 1.1]{CGN}). 

We note that, in our case, to obtain an almost everywhere regularity of transport maps on the boundary (as done in \cite{DF} for the interior case) we should prove that at almost every point on the boundary (with respect to the $(n-1)$-dimensional Hausdorff measure) our assumptions are satisfied. This seems to be a very delicate issue and it will be investigate in future works.\\

The proof of our $\e$-regularity result follows the lines of \cite{DF}, but 
compared to the interior case the boundary regularity presents many new additional difficulties,
and several new ideas
have to be introduced to overcome them.

Indeed, first of all, notice that one of the key steps in the proof of the interior regularity result of De Philippis and Figalli \cite{DF} is to construct a smooth approximating 
solution to the original problem, which is based on solving an optimal transportation problem with cost $-x\cdot y$ and constant densities. In their case, the condition that
the potential function $u$ is close to $\frac12|x|^2$ ensures that the approximating solution is smooth.
 But when we are around a point on the boundary of the domain one cannot
expect such approximating solution to be smooth. To handle this we find a new approximation
argument by using a suitable symmetrization trick, where we first slightly modify the domain and
then we reflect it around a boundary point with respect to the tangent hyperplane.

Second, for proving $C^{2,\alpha}$ regularity, the comparison principle plays very important role. Recall that
the proof of $C^{2,\alpha}$ regularity of solutions to Monge-Amp\`ere type equation is usually based on an iteration argument. 
In the interior case, at each iteration, the potential function $u$ solves  a Monge-Amp\`ere type equation in a sub-level
set of $u$ with Dirichlet boundary data. Then, one can construct an approximating solution $v$ which solves the standard
Monge-Amp\`ere equation with constant right hand side in a small convex neighborhood of the the sub-level set of $u$. The
comparison principle is used to compare the difference between $u$ and $v$.
For the interior case, the comparison principle can be proved more or less in
a standard way. However, around a point on the boundary of the domain, the Dirichlet data of the solutions
(both the original solution and the approximating solution) are not under control. Luckily, in our case, we can prove that
near $x_0$ the optimal map sends the boundary of source onto the boundary of the target, and this property allows us to show that 
the Neumann data
of the original solution and the approximating solution are very close. Hence, we are able to use comparison principle for Monge-Amp\`ere equation with 
mixed boundary data.\\

The rest of the paper is organized as follows.  In section 2 we introduce some notation and preliminaries, and state the main results.
Section 3 is devoted to the proof of Corollaries \ref{c1} and \ref{c2}. Section 4 contains the proof of Theorem \ref{t1}, and in the last section we prove Theorem~\ref{t2}.

 \smallskip
\noindent {\bf Acknowledgement.} The second author has been partially supported by NSF Grant DMS-1262411.
This material is based upon work supported by the National Science Foundation under Grant No. 0932078 000, while the authors were in residence 
at the Mathematical Sciences Research Institute in Berkeley, California, during the fall semester of 2013.

 \section{Preliminaries and main results}
 First, we introduce some conditions which should be satisfied by the cost. Let $X$  and $Y$ be
 two  bounded open subsets of $\mathbb{R}^n$. 
 \medskip
  
 (\textbf{C0}) The cost function is of class $C^3$ with $\|c\|_{C^3(X\times Y)}<\infty.$
 
 (\textbf{C1}) For any $x\in X$, the map $Y \ni y\mapsto D_xc(x,y)\in \mathbb{R}^n$ is injective.
 
 (\textbf{C2}) For any $y\in Y$, the map $X \ni x\mapsto D_yc(x,y)\in \mathbb{R}^n$ is injective.
   
 (\textbf{C3}) $\det(D_{xy}c)(x,y)\neq 0$ for all $(x,y)\in X\times Y.$
 
 \medskip
 
 A function $u: X\rightarrow \mathbb{R}$ is said {\it $c$-convex} if it can be written as
 \begin{equation} \label{121}
 u(x)=\underset{y\in Y}{\sup}\{-c(x,y)+\lambda_y\}
 \end{equation}
 for some family of constants $\{\lambda_y\}_{y \in Y}\subset \R$.
 Note that  $(\textbf{C0})$ and \eqref{121} imply that a $c$-convex function is semiconvex, namely, there exists some constant $K$ depending only
 on $\|c\|_{C^2(X\times Y)}$ such that $u+K|x|^2$ is convex. One immediate consequence of the semiconvexity is that $u$ is twice differentiable almost everywhere.
 
 It is well known that  $(\textbf{C0})$ and  $(\textbf{C1})$ ensure that there exists a unique optimal transport map, and there exists a $c$-convex function $u$ such that
 the optimal map is a.e. uniquely characterized in terms of $u$ 
 (and for this reason we denote it by $T_u$) by the relation
\begin{equation}
\label{eq:Tu}
 -D_xc(x, T_u (x))= \nabla u(x) \qquad \text{for a.e. }x.
\end{equation}
As explained for instance in \cite[Section 2]{DF} (see also \cite{DFreview}), the transport condition
$(T_u)_\#f=g$ implies that $u$ solves at almost every point the Monge-Amp\`ere type equation
\begin{equation}
\label{eq:MA T}
\det\Bigl(D^2u(x)+D_{xx}c\bigl(x,\cexp_x(\nabla u(x))\bigr) \Bigr)=\left|\det\left(D_{xy}c\bigl(x,\cexp_x(\nabla u(x))\bigr) \right) \right| \frac{f(x)}{g(\cexp_x(\nabla u(x)))},
\end{equation}
where $\cexp$ denotes the \textit{$c$-exponential map} defined as
\begin{equation}
\label{eq:cexp}
\text{for any $x\in X$, $y \in Y$, $p \in \R^n$},
\qquad
\cexp_x(p)=y \quad \Leftrightarrow \quad p=-D_xc(x,y).
\end{equation}
Notice that, with this notation, $T_u(x)= \cexp_x(\nabla u(x))$.

 For a $c$-convex function, analogous to the subdifferential for convex function, we can talk about its 
 $c$-subdifferential: If $u: X\rightarrow \mathbb{R}$ is a $c$-convex function as above, the {\it $c$-subdifferential} of $u$ at $x$ 
 is the (nonempty) set
 $$\partial_cu(x):=\bigl\{y\in \overline{Y}: u(z)\geq -c(z,y)+c(x,y)+u(x)\qquad \forall\,z\in X  \bigr\}.$$
 We also define \emph{Frechet subdifferential} of $u$ at $x$ as
 $$\partial^-u(x):=\{p\in\mathbb{R}^n: u(z)\geq u(x)+p\cdot(z-x)+o(|z-x|)\}.$$
 It is easy to check that $$y\in \partial_cu(x)\quad\Longrightarrow\quad -D_xc(x,y)\in \partial^-u(x).$$
 In the following, we use the notation 
 \begin{equation}
 \label{eq:Su}
 S_u(h):=\{x\in X:u(x)< h\}
 \end{equation}
 to denote the sub-level set of a function $u$. In fact, there is a
 more general concept of $c$-sub-level set of a $c$-convex function, namely, one can define 
 $$S(u, h, x_0, y_0):=\{x\in X: u(x)< c(x,y_0)-c(x_0, y_0)+u(x_0)+h\},$$ where $x_0\in X$ and $y_0\in \partial_cu(x_0)$.
 In this paper, we will always perform some transformation so that $x_0=\0$, $y_0=\0$, $u(\0)=0$, $c(x,\0)=0$, and the notation
 $S_u(h)$ will be enough for our purpose.

To state our main results we need to introduce some more notation.

We denote $x':=(x^1,\cdots,x^{n-1}) \in \R^{n-1},$ and $x=(x', x^n) \in \R^n$.
Given two domains $\mathcal{C}_1,\mathcal{C}_2\subset \R^n$, we are going to assume that there exist
two functions $P,Q:\R^{n-1}\to \R$ of class $C^2$
satisfying $P(\textbf{0})=Q(\textbf{0})=0$, $\nabla P(\textbf{0})=\nabla Q(\textbf{0})=\textbf{0}$,
and
\begin{equation}
\label{eq:PQ}
\{x^n>P(x')\}\cap B_{1/2}\subset\mathcal{C}_1\subset \{x^n>P(x')\}\cap B_{2},\quad
\{y^n>Q(y')\}\cap B_{1/2}\subset\mathcal{C}_2\subset \{y^n>Q(y')\}\cap B_{2}.
\end{equation}
Note that \eqref{eq:PQ} implies that 
\begin{equation}
\label{inclusion}
\mathcal{C}_1 \subset \{x^n \geq -2\|P\|_{C^2} \}, \quad
\mathcal{C}_2 \subset \{y^n \geq -2\|Q\|_{C^2} \}.
\end{equation}

In the following $K_1, K_2, K_3$, and $K$, are always used to denote some universal
 constants whose
 value may change depending on the context.
 In the next theorems we implicitly assume that $\mathcal C_1\subset X$ and $\mathcal C_2\subset Y$, so that the cost is defined and satisfies $(\textbf{C0})$-$(\textbf{C3})$ on  $\mathcal C_1\times  \mathcal C_2$.

 \begin{theorem}\label{t1}
Let $f, g$ be two densities supported in $\mathcal{C}_1$ and $\mathcal{C}_2$ respectively,
let $P$ and $Q$ be as in \eqref{eq:PQ}, and let $u: \mathcal{C}_1\rightarrow \mathbb{R}$ be
a $c$-convex function such that $\partial_{c}u(\mathcal{C}_1)\subset B_2$ and $(T_{u})_{\sharp}f=g$ (see \eqref{eq:Tu}).
Then, for every $\beta\in (0,1)$ there exist constants $\delta_0$,
$\eta_0>0$ such that the following holds: if
\begin{equation}\label{401}
\|P\|_{C^2}+ \|Q\|_{C^2}\leq \delta_0,
\end{equation}
\begin{equation}\label{402}
\|f-\textbf{1}\|_{L^\infty(\mathcal C_1)}+\|g-\textbf{1}\|_{L^\infty(\mathcal C_2)}\leq \delta_0,
\end{equation}
\begin{equation}\label{403}
\|c(x,y)+x\cdot y\|_{C^2(\mathcal{C}_1\times\mathcal{C}_2)}\leq \delta_0,
\end{equation}
and \begin{equation}\label{eq:uP}
\biggl\|u-\frac{1}{2}|x|^2\biggr\|_{L^\infty(\mathcal{C}_1)}\leq \eta_0,
\end{equation}
then $u\in C^{1,\beta}(\overline{\mathcal{C}_1\cap B_{\rho_0}})$ for sufficiently small $\rho_0$.
\end{theorem}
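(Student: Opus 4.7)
The strategy is to adapt the interior $\varepsilon$-regularity scheme of De Philippis--Figalli \cite{DF} to the boundary point $\mathbf{0}\in\partial \mathcal{C}_1$. The core is an iteration: at each dyadic scale $\rho_k = 2^{-k}\rho_0$, after an affine renormalization, the rescaled potential satisfies the hypotheses of the theorem with the flatness parameters in \eqref{401}--\eqref{eq:uP} decreased by a factor $\rho_k$. This yields a decay $\|u - q_k\|_{L^\infty(\mathcal{C}_1 \cap B_{\rho_k})} \le C \rho_k^{2+\beta}$ for a sequence of quadratic polynomials $q_k$, which gives $C^{1,\beta}$ regularity at $\mathbf{0}$ via a Campanato-type argument. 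Running the same argument at every boundary and interior point of $\overline{\mathcal{C}_1 \cap B_{\rho_0}}$ (using \cite{DF} in the interior) produces the claim.

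\textbf{Approximation lemma.} The main new input is the construction of a smooth approximating solution $\tilde v$ near the boundary. Given $u$ satisfying the hypotheses, the goal is to build a convex function $\tilde v$ defined on a suitable enlargement of $\mathcal{C}_1 \cap B_{1/2}$, solving $\det D^2 \tilde v = 1$, smooth up to the reflection hyperplane near $\mathbf{0}$, and such that $\|u - \tilde v\|_{L^\infty} \le \omega(\delta_0, \eta_0)$ with $\omega\to 0$ as $\delta_0, \eta_0 \to 0$. Following the symmetrization trick indicated in the introduction, I first modify the domains slightly so that $\partial \mathcal{C}_1$ and $\partial \mathcal{C}_2$ contain flat portions in $\{x^n = 0\}$ and $\{y^n = 0\}$ (using $\|P\|_{C^2}, \|Q\|_{C^2}\le \delta_0$), and then reflect across these hyperplanes to obtain symmetric domains $\tilde{\mathcal{C}}_1, \tilde{\mathcal{C}}_2$ that are close to balls. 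The Brenier potential for transport between the uniform densities on $\tilde{\mathcal{C}}_1$ and $\tilde{\mathcal{C}}_2$ is, by uniqueness, symmetric in $x^n$, hence $\partial_{x^n}\tilde v \equiv 0$ on $\{x^n = 0\}$. Restricted to $\{x^n > 0\}$, it therefore transports $\tilde{\mathcal{C}}_1 \cap \{x^n>0\}$ to $\tilde{\mathcal{C}}_2 \cap \{y^n>0\}$, and the combination of this tangency condition with the uniform convexity of $\partial \tilde{\mathcal{C}}_j$ away from the crease reduces smoothness of $\tilde v$ near $\mathbf{0}$ to Theorem~\ref{t0}. The closeness estimate $\|u - \tilde v\|_\infty \to 0$ follows by compactness and contradiction: if it failed, one could extract limits of the rescaled potentials that solve the same Brenier problem with quadratic cost on a half-ball with matching boundary data, contradicting uniqueness of Brenier maps.

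\textbf{Iteration and conclusion.} With the approximation lemma in hand, the smoothness of $\tilde v$ furnishes a quadratic $q_1$ with $\|\tilde v - q_1\|_{L^\infty(B_{\rho_1})} \le C \rho_1^3$ for any prescribed small $\rho_1$. Combining with $\|u - \tilde v\|_\infty$ small and taking $\delta_0, \eta_0$ sufficiently small depending on $\beta$, the renormalized potential $u_1(x) := \rho_1^{-2}\bigl(u(\rho_1 x) - q_1(\rho_1 x)\bigr)$ again satisfies the hypotheses of the theorem on a renormalized domain with strictly smaller flatness parameters. Iterating and summing yields the required $C^{1,\beta}$ decay. A key point is that the boundary profile $P$ rescales with an additional factor of $\rho$ at each step, which relies on showing that near $\mathbf{0}$ the map $T_u$ sends $\partial\mathcal{C}_1$ into $\partial\mathcal{C}_2$; this follows from the approximation lemma, since $T_{\tilde v}$ has this property by construction and $u$ is uniformly close to $\tilde v$.

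\textbf{Main difficulty.} The principal obstacle is the rigorous construction and analysis of $\tilde v$ in the approximation lemma. The reflected domain $\tilde{\mathcal{C}}_1$ has a crease along $\{x^n = 0\}$ and is therefore not $C^2$, so Caffarelli's global regularity does not apply directly; one must exploit the symmetry of the problem to promote interior smoothness of $\tilde v$ (relative to $\tilde{\mathcal{C}}_1$) into a genuine $C^2$-type estimate near interior points of the reflection hyperplane. In addition, one must check that the symmetrized target densities and domains remain close to constants resp.~to balls, that the preliminary modification of $\partial\mathcal{C}_j$ does not destroy the quantitative closeness $\|u - \tilde v\|_\infty$, and that the boundary-to-boundary property of $T_u$ is preserved uniformly under rescaling — all of which require delicate compactness arguments that must be carried out consistently across the iteration.
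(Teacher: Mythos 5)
Your outline follows the paper's general strategy (recenter, symmetrize and approximate by a quadratic-cost potential, then iterate), but the mechanism you propose for the key approximation lemma does not work, and this is the heart of the matter. You claim that smoothness of $\tilde v$ near the origin "reduces to Theorem~\ref{t0}" via "uniform convexity of $\partial\tilde{\mathcal C}_j$ away from the crease". Under the hypotheses of Theorem~\ref{t1} the sets $\mathcal C_1,\mathcal C_2$ are only squeezed as in \eqref{eq:PQ}: away from the graph portion their boundaries are completely arbitrary, so neither they nor their symmetrizations are convex or $C^2$, they are not close to balls (they sit between radius $1/2$ and radius $2$), and no slight modification makes them uniformly convex without destroying the closeness of the modified potential to $u$. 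Hence Caffarelli's global theorem is simply unavailable, and your "main difficulty" paragraph acknowledges the problem without supplying a substitute. The correct mechanism is purely interior and relies on an ingredient you never invoke: the $L^\infty$ closeness of $u$ (hence, by the compactness lemma, of $\tilde v$) to $\frac12|x|^2$. After reflection the region near the origin is interior to $\tilde{\mathcal C}_1$, and this closeness localizes the problem ($\partial^-\tilde v(B_{1/10})\subset B_{1/6}$, strict convexity), so that Caffarelli's interior theory, as in \cite{FK}, yields $\|\tilde v\|_{C^3(B_{1/10})}\le K$. Without this step there is no smooth comparison profile and the iteration cannot start.

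Two further points are off. First, you use "$T_u$ sends $\partial\mathcal C_1$ into $\partial\mathcal C_2$ near the origin" inside the iteration and justify it by $\|u-\tilde v\|_{L^\infty}$ being small; sup-norm closeness of potentials does not control the map pointwise, so this deduction is invalid. In the paper this property is not needed for Theorem~\ref{t1}: the inclusion of the image sets at each scale is obtained by running the same sub-level-set argument on the $c$-transform $\tilde u^{\tilde c}$ (Lemma~\ref{l2}, together with the structural requirement $A^{-1}\mathbf{e}_n\parallel A\mathbf{e}_n$ so that both boundaries remain flat graphs after renormalization), and the genuine boundary-to-boundary statement is proved only afterwards (Remark~\ref{rmk1}) as a consequence of the $C^{1,\beta}$ regularity of $u$ and $u^c$, for use in Theorem~\ref{t2}. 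Second, your claimed decay $\|u-q_k\|_{L^\infty(B_{\rho_k})}\le C\rho_k^{2+\beta}$ with quadratic $q_k$ is the wrong normalization: it would amount to a pointwise $C^{2,\beta}$ estimate, which cannot hold with densities merely $L^\infty$-close to $1$; the correct bookkeeping tracks the sub-level sets $S_u(h_0^k)$, trapped between comparable half-balls of radius $\sim(\sqrt{h_0})^k$ up to factors $K_1^{\pm k}$, giving $u\le r_0^{(1+\beta)k}$ on $B_{r_0^k}$ and hence $C^{1,\beta}$ at the point. Finally, treating "boundary points plus interior points via \cite{DF}" separately does not yield uniform estimates up to the boundary: for points near but not on $\{x^n=P(x')\}$ one must iterate at boundary scales until the point becomes interior at its own scale and only then switch to the interior theorem, which is exactly the dichotomy carried out in Step 4 of the paper's proof.
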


\begin{theorem}\label{t2}
Let $u, f, g, \eta_0, \delta_0$ be as in Theorem \ref{t1}, and assume in addition $f\in C^\alpha(B_{1/2}\cap \{x^n\geq P(x')\}),$  $g\in C^\alpha(B_{1/2}\cap \{y^n\geq Q(y')\}).$
 There exist small constants $\eta_1\leq \eta_0$ and $\delta_1\leq \delta_0$ such that, if $$\|P\|_{C^2}+ \|Q\|_{C^2}\leq \delta_1,$$
\begin{equation}\label{e41}
\|f-\textbf{1}\|_{L^\infty(\mathcal C_1)}+\|g-\textbf{1}\|_{L^\infty(\mathcal C_2)}\leq \delta_1,
\end{equation}
\begin{equation}\label{e42}
\|c(x,y)+x\cdot y\|_{C^2(\mathcal{C}_1\times\mathcal{C}_2)}\leq \delta_1,
\end{equation}
and
\begin{equation}\label{e43}
\biggl\|u-\frac{1}{2}|x|^2\biggr\|_{L^\infty(\mathcal{C}_1)}\leq \eta_1,
\end{equation}
then there exists $\rho_1>0$ small such that the following holds: for any point $z\in B_{\rho_1}\cap \{x_n= P(x')\}$ there exists a second order polynomial $p$ such that,
$$
|u(x)-p(x)|\leq C |x-z|^{2+\alpha'}\qquad \forall\,x\in B_{\rho_1}(z)\cap \{x_n\geq  P(x')\},
$$
where $C>0$ and $\alpha' \in (0,\alpha]$ are constants depending only on $\delta_0, \eta_0, \alpha, \|f\|_{C^\alpha},
\|g\|_{C^\alpha}.$
Moreover, there exists $\rho_2>0$ small such that $u\in C^{2,\alpha}_{\rm loc}(\mathcal{C}_1\cap B_{\rho_2})\cap C^{2,\alpha'}(\overline{\mathcal{C}_1\cap B_{\rho_2}})$.
\end{theorem}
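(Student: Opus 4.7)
The plan is to adapt to a boundary point the iterative scheme used by De Philippis--Figalli \cite{DF} for interior $C^{2,\alpha}$ regularity. The core is a one-step improvement lemma: at every boundary point $z\in B_{\rho_1}\cap\{x^n=P(x')\}$ and every sufficiently small scale $r$, one finds a quadratic polynomial $p_{z,r}$ with $\|u-p_{z,r}\|_{L^\infty(B_r(z)\cap\{x^n\geq P(x')\})}\leq C r^{2+\alpha'}$; iterating at dyadic scales then produces the desired pointwise expansion. Combining this boundary expansion with the interior $C^{2,\alpha}_{\rm loc}$ estimate of \cite{DF} applied at each point of $\mathcal C_1\cap B_{\rho_2}$ --- the required smallness at any such interior point being supplied by the $C^{1,\beta}$ modulus from Theorem~\ref{t1} --- delivers the final $C^{2,\alpha}_{\rm loc}(\mathcal{C}_1\cap B_{\rho_2})\cap C^{2,\alpha'}(\overline{\mathcal{C}_1\cap B_{\rho_2}})$ conclusion.

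\textbf{One-step improvement via symmetrization.} To build the approximating model near $z$, which after translation and an affine change of coordinates we may take to be $\mathbf{0}$ with inward normal $e_n$, we first slightly modify $\mathcal C_1$ so that its boundary coincides with $\{x^n=0\}$ in a neighbourhood of $\mathbf{0}$, and then reflect this flattened set across $\{x^n=0\}$ to obtain a symmetric convex domain $\widetilde{\mathcal C}_1$. We then solve the standard Monge--Amp\`ere problem
\[
\det D^2 v = \kappa \quad\text{in}\quad \widetilde{\mathcal C}_1, \qquad v=u \quad\text{on}\quad \partial\widetilde{\mathcal C}_1\setminus\{x^n=0\},
\]
with $\kappa$ a constant matching $f(\mathbf{0})/g(\mathbf{0})$ and Dirichlet data assigned only on the outer curved part. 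By reflection symmetry the solution $v$ is even in $x^n$, and Caffarelli's boundary regularity for the standard Monge--Amp\`ere equation with constant right-hand side then gives $v\in C^{2,\alpha}$ up to the flat face; the quadratic part of $v$ at $\mathbf{0}$ is our candidate $p_{z,r}$.

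\textbf{Mixed boundary comparison (the main obstacle).} The hard part is to show $\|u-v\|_{L^\infty}\leq C r^{2+\alpha'}$. Because we have no direct Dirichlet control of $u$ on the flattened face of $\widetilde{\mathcal C}_1$, we must match $u$ and $v$ through a Neumann-type condition. This is where the boundary-to-boundary property of the optimal map enters: under \eqref{e41}--\eqref{e43} and the uniform convexity of both domains, an argument in the spirit of Caffarelli's \cite{C96} (transported to our setting by the $C^2$-closeness of $c$ to $-x\cdot y$) shows that $T_u$ sends $\partial\mathcal C_1\cap B_{\rho_1}$ into $\partial\mathcal C_2$. Through \eqref{eq:Tu} this forces $\nabla u$ on the flattened face to lie $\delta$-close to a hyperplane, and the corresponding condition for $v$ is exact by symmetry. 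Subtracting, $w=u-v$ satisfies an elliptic equation obtained by linearizing the Monge--Amp\`ere operator at $v$, with Dirichlet data on the outer curved part of $\partial\widetilde{\mathcal C}_1$ and an almost-Neumann condition on the flat face; a maximum principle for such mixed boundary problems then yields the desired $L^\infty$ bound, with a constant that tends to zero as $\delta_1,\eta_1\to 0$.

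\textbf{Iteration and conclusion.} With the one-step improvement in hand, we iterate at dyadic scales $r_k=\lambda^k$: the rescaling sending $B_{r_k}(z)\cap\mathcal C_1$ to the unit configuration transforms $u-p_{z,r_k}$ into a new $c$-convex function still satisfying \eqref{e41}--\eqref{e43} (the $C^2$-closeness of the cost to $-x\cdot y$ and the $C^2$-smallness of $P,Q$ are preserved, or even improved, under such rescalings), so the improvement applies again. Summing a geometric series produces a limiting quadratic polynomial $p_z$ with $|u(x)-p_z(x)|\leq C|x-z|^{2+\alpha'}$ uniformly in $z$, which is the first assertion of the theorem; the second assertion follows by combining this boundary expansion with the interior $C^{2,\alpha}_{\rm loc}$ estimate already mentioned.
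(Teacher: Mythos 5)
Your overall strategy is the same as the paper's (reflect across the flat face to build a smooth approximating solution of the standard Monge--Amp\`ere equation, use the boundary-to-boundary property of $T_u$ to control the Neumann data of $u$ on the flat face, compare $u$ and $v$ with mixed Dirichlet/Neumann data, iterate on dyadic scales, and glue with the interior estimates of \cite{DF}). However, two steps as you propose them have genuine gaps. First, in the comparison step you linearize the Monge--Amp\`ere operator at $v$ and apply a maximum principle to $w=u-v$. At this stage $u$ is only $C^{1,\beta}$ (from Theorem \ref{t1}) and solves the $c$-Monge--Amp\`ere equation \eqref{eq:MA T} only almost everywhere/in a weak sense, so the difference does not satisfy a classical uniformly elliptic equation with controlled coefficients, and a maximum principle for a mixed boundary value problem for such a degenerate weak equation is not available off the shelf. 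The paper avoids linearization entirely: it builds explicit barriers $v^\pm$ (dilations of $v$ by factors $1\pm4\epsilon\pm O(\sqrt\delta)$ plus a linear term $4C_K''\delta^{\gamma/n}(\pm x^n+\cdots)$) and compares the nonlinear equations directly, with the strict Neumann inequality \eqref{e23} ruling out a touching point on the flat face; this is Lemma \ref{l3}, and it also explains why the approximating problem is posed with constant Dirichlet data $1$ on $\partial(\mathcal N_{\delta^\gamma}(co[\tilde S]))$ rather than with data $v=u$ on a partial boundary, where as stated your problem is not well posed (on the reflected domain $u$ is not defined on the lower curved part). Also note the boundary-to-boundary property does not come from uniform convexity (which is not assumed here) but from applying Theorem \ref{t1} to both $u$ and $u^c$, as in Remark \ref{rmk1}.

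Second, your iteration glosses over the two quantitative ingredients that actually produce the $r^{2+\alpha'}$ rate. (a) Re-applying the hypotheses \eqref{e41}--\eqref{e43} at each dyadic scale, even if the smallness is ``preserved,'' only reproduces the $C^{1,\beta}$ information of Theorem \ref{t1}; the gain to exponent $2+\alpha'$ comes from scale-dependent decay, namely ${\rm osc}_{S_{h_k}}f+{\rm osc}_{T_u(S_{h_k})}g\le Ch_k^{\alpha\beta/2}$ (H\"older continuity of the densities plus $C^{0,\beta}$ regularity of $T_u$) and $\delta_k\le Ch_k^{\beta/2}$ for the rescaled cost and boundaries, which turn the per-scale comparison error into the geometric bound $Ch_k^{1+\sigma}$; your constant ``tending to zero as $\delta_1,\eta_1\to0$'' is a fixed smallness and yields no rate in $r$. (b) To iterate at all one must keep the affine normalizations and the paraboloids uniformly controlled, i.e., prove inductively that the sections $S_{h_k}$ stay comparable to half-balls $B_{\sqrt{h_k}}\cap\{x^n\ge P\}$; this is the paper's Step 1, carried out by propagating the bound $|D^2v_k(\0)|\le M+1$ through Pogorelov/Schauder estimates on $v_k-v_{k+1}$ and summing $\sum_k h_k^\sigma$. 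Without this ``good shape'' control, the claim that the rescaled functions satisfy the same hypotheses with the same constants is unjustified, and the sequence of polynomials $p_{z,r_k}$ need not converge.
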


As a corollary of the two theorems above, we can easily recover (and improve)
the results from \cite{CGN}.

\begin{corollary}\label{c1}
Suppose $X$ and $Y$ are two $C^2$ uniformly convex bounded domains in $\mathbb{R}^n$.  Suppose $\lambda_1$
and $\lambda_2$ are two  positive constants such that $\int_{X}\lambda_1=\int_{Y}\lambda_2.$
 Assume
$f$ and $g$ are two nonnegative densities satisfying
\begin{equation}\label{031}
\int_{X} f(x)dx=\int_{Y}g(y)dy, \quad \|f-\lambda_1\|_{C^{0}(X)}+\|g-\lambda_2\|_{C^{0}(Y)}\leq \delta
\end{equation}
for some $\delta>0$.
Let $u$ be the $c$-convex function associated to the optimal transport problem between $f$ and $g$ with cost 
$c(x,y)$ (see \eqref{eq:Tu}), where $c$ satisfies $(\textbf{C0})$-$(\textbf{C3})$ and 
\begin{equation}\label{902}
\|c+x\cdot y\|_{C^2(X\times Y)} \leq \delta.
\end{equation}
Then, for every 
$\beta\in(0,1)$ there exists $\bar \delta >0$,
depending only on $\beta$, $n$, $\lambda_1$, and $\lambda_2$, such that $u\in C^{1,\beta}(\overline{X})$ provided $\delta \leq \bar \delta$.

In particular, if $c(x,y)=\frac{1}{p}|x-y|^p$ with $p>1$:

$\bullet$  There exists $\bar\delta>0$, depending only on $\beta$, $n$, $p$, ${\rm diam}(X)$, ${\rm diam}(Y)$,  $\lambda_1$, and $\lambda_2$, such that if ${\rm dist}(X,Y)\geq 1/\bar\delta$
then $u\in C^{1,\beta}(\overline{X})$.

$\bullet$ Let $R>0$.  There exists $\bar\delta>0$  depending only on $\beta$, $n$, $p$, ${\rm diam}(X)$, ${\rm diam}(Y)$, $R$, $\lambda_1$, and $\lambda_2$, such that if $2-\bar{\delta}\leq p\leq 2+\bar{\delta}$ and  ${\rm dist}(X,Y)\geq R$, then $u\in C^{1,\beta}(\overline{X}).$
\end{corollary}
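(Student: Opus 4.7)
The plan is to apply Theorem \ref{t1} at each boundary point of $X$, the interior $\varepsilon$-regularity result of \cite{DF} at each interior point, and conclude via a finite cover of $\overline X$. Preliminarily, the linear rescaling $x \mapsto \lambda_1^{1/n}x$, $y \mapsto \lambda_2^{1/n}y$ reduces to $\lambda_1 = \lambda_2 = 1$. Then introduce the \emph{limit potential} $u_*$: the Brenier potential between the uniform densities on $X$ and $Y$ for the cost $-x\cdot y$. By Caffarelli's Theorem \ref{t0}, $u_* \in C^{1,\alpha}(\overline X)$ and $T_* := \nabla u_*$ is a diffeomorphism $\overline X \to \overline Y$ sending $\partial X$ onto $\partial Y$. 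Standard Monge--Kantorovich stability yields $\|u - u_*\|_{L^\infty(X)} \leq \omega(\delta)$ with $\omega(\delta) \to 0$ as $\delta \to 0$.

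Fix $x_0 \in \partial X$ and set $y_0 := T_*(x_0) \in \partial Y$. Assuming for the moment that $u_*$ admits a pointwise second-order expansion at $x_0$ with Hessian $H = H(x_0) > 0$ (justified below), I would perform the affine change of coordinates $x \mapsto A(x - x_0)$, $y \mapsto A^{-1}(y - y_0)$ with $A := H^{1/2}$, together with rotations sending the inner normals of $\partial X, \partial Y$ to $e_n$. Modulo absorbing split linear and constant terms into the potential (which does not affect the transport problem), this transforms $u_*$ into a function tangent to $\tfrac12|x|^2$ at the origin while preserving the smallness of $\|c + x\cdot y\|_{C^2}$. Rescale by $\rho > 0$ via $\hat u(\hat x) := \rho^{-2}u(\rho \hat x)$ and analogously $\hat c, \hat f, \hat g$. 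One checks that
\begin{equation*}
\|P_\rho\|_{C^2} + \|Q_\rho\|_{C^2} \leq C\rho, \quad \|\hat f - 1\|_{L^\infty} + \|\hat g - 1\|_{L^\infty} \leq C\delta, \quad \|\hat c + \hat x\cdot \hat y\|_{C^2} \leq C\delta,
\end{equation*}
and, combining the Taylor expansion of $u_*$ with the stability estimate,
\begin{equation*}
\bigl\|\hat u - \tfrac12|\hat x|^2\bigr\|_{L^\infty(B_1)} \leq \rho^{-2}\omega(\delta) + \sigma(\rho), \qquad \sigma(\rho) \to 0 \text{ as } \rho \to 0.
\end{equation*}
Choosing $\rho$ first small, then $\delta$ small, Theorem \ref{t1} gives $\hat u \in C^{1,\beta}$ near the origin, which unrolls to $u \in C^{1,\beta}$ in a neighborhood of $x_0$.

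The hardest point is justifying the uniform pointwise second-order expansion of $u_*$ at every $x_0 \in \partial X$, since Theorem \ref{t0} provides only $C^{1,\alpha}(\overline X)$ globally. This can be extracted from the strict $c$-convexity of $u_*$ at $x_0$ (the $c$-subdifferential $\partial_c u_*(x_0) = \{y_0\}$ is a singleton, since $T_*$ is a bijection up to the boundary) combined with the $C^2$ uniform convexity of $\partial X, \partial Y$: running Caffarelli's interior $C^{2,\alpha}$ theory on the nested sections $S(u_*, h, x_0, y_0)$ and tracking their affine normalizations as $h \to 0^+$ yields the desired expansion with a modulus $\sigma$ uniform in $x_0$. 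Combined with the interior $\varepsilon$-regularity of \cite{DF} and a finite cover of $\overline X$, this gives $u \in C^{1,\beta}(\overline X)$.

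Finally, the particular cases $c = \frac1p|x-y|^p$ follow from the main statement after verifying \eqref{902} by an additional linear normalization. Since $D_{xy}c = -|x-y|^{p-2}\bigl(I + (p-2)\hat e \otimes \hat e\bigr)$ with $\hat e = (x-y)/|x-y|$, when $\mathrm{dist}(X,Y)$ is large relative to $\mathrm{diam}(X), \mathrm{diam}(Y)$ the unit vector $\hat e$ is nearly constant on $X \times Y$; dividing the cost by $|x_0 - y_0|^{p-2}$, rotating $\hat e$ to $e_n$, and absorbing the eigenvalue $p-1$ by a one-dimensional dilation in $e_n$ yields $\|c + x\cdot y\|_{C^2} = O\bigl(\mathrm{diam}(X)/\mathrm{dist}(X,Y)\bigr)$, proving the first bullet. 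The second bullet is direct, since $\mathrm{dist}(X,Y) \geq R$ is fixed while $p \to 2$ makes the factor $p-2$ itself small and \eqref{902} follows without further normalization.
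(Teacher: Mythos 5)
Your overall route is the paper's own: compare $u$ with the potential $v$ of the quadratic-cost transport between the uniform densities on $X$ and $Y$, invoke Caffarelli's global regularity plus an $L^\infty$ stability estimate, normalize affinely at each boundary point, rescale so the boundary becomes flat, apply Theorem \ref{t1} there (and the interior result \cite[Theorem 4.3]{DF} at interior points), and finish with a covering argument. However, the step you single out as ``the hardest point'' rests on a misreading: Theorem \ref{t0} says the optimal \emph{map} is $C^{1,\alpha}$ up to the boundary, i.e.\ the potential $v=u_*$ is $C^{2,\alpha}(\overline X)$, so the uniform pointwise second-order expansion at every $x_0\in\partial X$ is immediate; this is exactly what the paper uses, after the normalization of \cite[Proposition 2.1]{CGN} (which also ensures that a \emph{single} rotation sends both transformed inner normals to $\mathbf{e}_n$ --- a compatibility coming from the obliqueness relation $D^2v(x_0)\,T_{x_0}\partial X=T_{y_0}\partial Y$ that you would still have to check, since rotating $x$ and $y$ independently destroys \eqref{902}). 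Conversely, the substitute argument you sketch --- running Caffarelli's \emph{interior} $C^{2,\alpha}$ theory on the sections $S(u_*,h,x_0,y_0)$ with $x_0\in\partial X$ --- does not work as stated: those sections are cut by $\partial X$, interior normalization does not apply to them, and controlling their shape as $h\to0^+$ at a boundary point is precisely the content of Caffarelli's boundary theory. So, as written, your key step is unproved; the fix is simply to quote Theorem \ref{t0} at the level of the potential.

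Two further gaps. First, localization: Theorem \ref{t1} requires densities supported on the rescaled domains $\mathcal C_1,\mathcal C_2$ with $(T_u)_\sharp f=g$ and $\partial_c u(\mathcal C_1)\subset B_2$, and rescaling a Euclidean ball $B_\rho(x_0)$ does not produce such a pair, because $T_u(X\cap B_\rho(x_0))$ is not a controlled set. The paper localizes via the sub-level sets $S_u(h)$ and their images $\partial_c u(S_u(h))$, transferring the ball inclusions from $v$ to $u$ through the stability estimate (this is \eqref{903}--\eqref{904}) and only then rescaling by $\sqrt h$; you need this (or an equivalent device) to verify the hypotheses of Theorem \ref{t1}. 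Second, the power-cost bullets: \eqref{902} bounds the full $C^2$ norm of $c+x\cdot y$, including the $C^0$, $C^1$ and pure second-derivative parts, so a normalization acting only on $D_{xy}c$ (dividing by $|x_0-y_0|^{p-2}$, rotating, dilating) cannot yield \eqref{902} --- for instance $\|c\|_{C^0}\sim {\rm dist}(X,Y)^p/p$ and $D_{xx}c$, $D_{yy}c$ remain of order one after your rescaling. One must also subtract separable terms (a function of $x$ plus a function of $y$, e.g.\ $\tfrac12|x|^2+\tfrac12|y|^2$ for $p$ near $2$, and the analogous subtraction in the far-apart case as in \cite{CGN}), which leaves the transport problem unchanged; in particular the second bullet is not correct ``without further normalization.''
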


\begin{corollary}\label{c2}
In Corollary \ref{c1}, assume that the condition \eqref{031} is replaced by
\begin{equation} \label{032}
\int_{X} f(x)dx=\int_{Y}g(y)dy,\qquad  0<f\in C^{\alpha}(\overline X),\quad 0<g\in C^{\alpha}(\overline Y),
\end{equation}
and that $X,Y$ are of class $C^{2,\alpha}$.
Then there exists $\bar \delta >0$,
depending only on $\beta$, $n$, $\inf f$, $\inf g$,  $\|f\|_{C^\alpha}$, and
$\|g\|_{C^\alpha}$, such that $u\in C^{2,\alpha}_{\rm loc}(X)\cap C^{2,\alpha'}(\overline{X})$ for some $\alpha'\in (0,\alpha)$, provided $\|c(x,y)+x\cdot y\|_{C^2(X\times Y)} \leq \bar \delta$.  

In particular, if $c(x,y)=\frac{1}{p}|x-y|^p$ with $p>1$:

$\bullet$  There exists $\bar\delta>0$, depending only on $\beta$, $n$, $p$, ${\rm diam}(X)$, ${\rm diam}(Y)$, $\inf f$, $\inf g$,  $\|f\|_{C^\alpha}$, and
$\|g\|_{C^\alpha}$,
such that if $ {\rm dist}(X,Y)\geq 1/\bar{\delta}$
then $u\in C^{2,\alpha}_{\rm loc}(X)\cap C^{2,\alpha'}(\overline{X})$ for some $\alpha'\in (0,\alpha)$. 

$\bullet$ Let $R>0$.  There exists $\bar\delta>0$  depending only on $\beta$, $n$, $p$, ${\rm diam}(X)$, ${\rm diam}(Y)$, $R$,  $\inf f$, $\inf g$,  $\|f\|_{C^\alpha},$ and
$\|g\|_{C^\alpha}$,
 such that if $|p-2|\leq \bar{\delta}$ and 
 ${\rm dist}(X,Y)\geq R$ then $u\in C^{2,\alpha}_{\rm loc}(X)\cap C^{2,\alpha'}(\overline{X})$ for some $\alpha'\in (0,\alpha)$.

\end{corollary}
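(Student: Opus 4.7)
The plan is to reduce Corollary \ref{c2} to Theorem \ref{t2} via a local normalization argument at each boundary point, in complete analogy with how Corollary \ref{c1} is reduced to Theorem \ref{t1}. First I would invoke Corollary \ref{c1} to obtain the global $C^{1,\beta}$ regularity of $u$ up to $\overline X$ for, say, $\beta = \tfrac12$. This already tells us that $T_u$ extends continuously to $\overline X$ and maps $\partial X$ into $\partial Y$; moreover, by the stability of optimal transport under perturbation of the cost (classical compactness combined with Caffarelli's theorem for $c(x,y) = -x\cdot y$ between $C^{2,\alpha}$ uniformly convex domains with H\"older densities), $T_u$ is close, uniformly in $\overline X$, to a smooth diffeomorphism $T_0: \overline X \to \overline Y$ when $\delta$ is small.

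The main step is then a localization and rescaling at an arbitrary boundary point $x_0 \in \partial X$. Let $y_0 := T_u(x_0) \in \partial Y$; after translating $x_0$ and $y_0$ to the origin and rotating, we may assume that the inward unit normals of $X,Y$ at these points are $e_n$, so that near the origin $\partial X = \{x_n = \tilde P(x')\}$ and $\partial Y = \{y_n = \tilde Q(y')\}$ with $\tilde P,\tilde Q$ of class $C^{2,\alpha}$, vanishing with their gradients at $0$. For $r>0$ small define the rescaled potential
\begin{equation*}
u_r(x) := \frac{1}{r^2}\Bigl[u(rx) - u(0) - \nabla u(0)\cdot(rx)\Bigr],
\end{equation*}
together with the rescaled cost $c_r(x,y) := r^{-2}\bigl[c(rx,ry) - c(rx,0) - c(0,ry) + c(0,0)\bigr]$, rescaled densities $f_r, g_r$, and rescaled domains $\mathcal C_1^{(r)}, \mathcal C_2^{(r)}$ whose boundaries are graphs of $r^{-1}\tilde P(r\,\cdot)$ and $r^{-1}\tilde Q(r\,\cdot)$. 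As $r\to 0$, the boundary defining functions tend to $0$ in $C^2$ thanks to $\tilde P,\tilde Q\in C^{2,\alpha}$, the cost condition $\|c_r(x,y)+x\cdot y\|_{C^2}\to 0$ follows from \eqref{902} plus the continuity of $D^2_{xy}c$, and an affine normalization absorbing the constants $\lambda_1,\lambda_2$ makes $\|f_r-1\|_{L^\infty}, \|g_r-1\|_{L^\infty}$ arbitrarily small. Finally, $\|u_r-\tfrac12|x|^2\|_{L^\infty}\to 0$ as $r\to 0$ because $u$ is $C^{1,\beta}$ up to the boundary with $\nabla u(0)=-D_xc(0,y_0)$, so subtracting the tangent plane and dividing by $r^2$ yields a function whose Hessian at $0$ equals $-D^2_{xy}c(0,0) \approx I$ modulo $\delta$; the $C^{1,\beta}$ regularity provides the required $C^0$ control.

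Once all these smallness hypotheses are verified, Theorem \ref{t2} applied to $u_r$ produces, for every $z$ on the rescaled boundary in $B_{\rho_1}$, a second-order polynomial $p_z$ with $|u_r(x)-p_z(x)| \leq C|x-z|^{2+\alpha'}$ on $B_{\rho_1}(z)\cap\mathcal C_1^{(r)}$. Undoing the rescaling gives the corresponding second-order expansion of $u$ at $x_0$. Covering $\partial X$ by finitely many such neighborhoods, combining with the interior $C^{2,\alpha}$ statement of Theorem \ref{t2} (or directly with the interior result of \cite{DF}) and a standard interpolation between the boundary expansion and interior $C^{2,\alpha}$ bounds yields $u\in C^{2,\alpha}_{\rm loc}(X)\cap C^{2,\alpha'}(\overline X)$. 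The two specific bullets about $c(x,y) = \tfrac{1}{p}|x-y|^p$ reduce to a direct computation: after rescaling $X,Y$ to have unit diameter, $\|c+x\cdot y\|_{C^2(X\times Y)}$ is small when either ${\rm dist}(X,Y)$ is large (forcing the relevant Hessian to be close to $-I$), or when $p$ is close to $2$ and ${\rm dist}(X,Y)\geq R$ keeps $|x-y|$ bounded away from the singular set $\{x=y\}$.

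The main obstacle I anticipate is establishing the $L^\infty$ smallness of $u_r-\tfrac12|x|^2$ uniformly across all boundary points $x_0$. Pointwise closeness at a fixed point follows from second-order Taylor expansion, but one needs a uniform modulus of continuity for $D^2_{xy}c$, for $\tilde P,\tilde Q$, and crucially a uniform interior-boundary decay rate for $u-\tfrac12|x|^2$ that is independent of which boundary point we expand around. This is where the global $C^{1,\beta}$ regularity produced by Corollary \ref{c1} is essential, as it gives a uniform modulus of continuity for $\nabla u$ up to $\partial X$, converting the $C^0$-smallness hypothesis of Theorem \ref{t2} into something that can be verified uniformly once $\delta$ and $r$ are chosen small enough.
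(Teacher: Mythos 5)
Your overall architecture (localize at a boundary point, rescale, verify the hypotheses of Theorem \ref{t2}, then cover and combine with the interior result of \cite{DF}; treat the $\frac1p|x-y|^p$ bullets by observing that after subtracting $\frac12|x|^2+\frac12|y|^2$ the cost is $C^2$-close to $-x\cdot y$) is the same as the paper's. However, the step on which everything hinges --- verifying the smallness hypothesis \eqref{e43} for the rescaled potential --- is not correct as you argue it. You set $u_r(x)=r^{-2}\bigl[u(rx)-u(0)-\nabla u(0)\cdot(rx)\bigr]$ and claim $\|u_r-\frac12|x|^2\|_{L^\infty}\to 0$ as $r\to 0$ ``because $u$ is $C^{1,\beta}$ up to the boundary''. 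But $C^{1,\beta}$ regularity only gives $|u(rx)-u(0)-\nabla u(0)\cdot rx|\leq C(r|x|)^{1+\beta}$, so after dividing by $r^2$ the error is of order $r^{\beta-1}$, which \emph{blows up} as $r\to 0$; no modulus of continuity of $\nabla u$ can rescue this, since closeness of $u_r$ to $\frac12|x|^2$ is genuinely second-order information. Your remark that the rescaled function has ``Hessian at $0$ equal to $-D^2_{xy}c(0,0)\approx I$'' is also circular: twice differentiability of $u$ at the boundary point with a controlled Hessian is exactly what Theorem \ref{t2} is supposed to deliver, and in any case the Monge--Amp\`ere equation does not identify $D^2u$ with $-D_{xy}c$.

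The paper fills this gap differently: it introduces the potential $v$ of the optimal transport problem from $f\mathbf{1}_X$ to $g\mathbf{1}_Y$ with cost $-x\cdot y$, which by Caffarelli's boundary regularity theory (the $C^\alpha$ version of Theorem \ref{t0}, for $C^{2,\alpha}$ uniformly convex domains) is $C^{2,\alpha}$ up to the boundary; after the affine normalization of \cite[Proposition 2.1]{CGN} one has $D^2v(x_0)=\mathrm{Id}$, the stability estimate \eqref{901} gives $\|u-v\|_{L^\infty}\leq\omega(\delta)$, and then one rescales at a \emph{fixed} scale $\sqrt h$: the Taylor error of $v$ contributes $O(h^{\alpha/2})\cdot h$ and the stability error contributes $\omega(\delta)/h$, so choosing first $h$ small and then $\delta$ small (depending on $h$) yields \eqref{e43}, together with \eqref{903}--\eqref{904} and the flatness of the rescaled boundaries. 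Note also that your preliminary appeal to Corollary \ref{c1} is not available as stated, since in Corollary \ref{c2} the densities are merely positive and $C^\alpha$, not $C^0$-close to constants as \eqref{031} requires; the near-constancy of the densities needed for Theorem \ref{t2} is recovered only after rescaling to small scales, where H\"older continuity makes $f,g$ nearly constant. Without the comparison function $v$ (or some equivalent source of second-order information at the boundary point), your verification of the hypotheses of Theorem \ref{t2} does not go through.
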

\begin{remark}
In Corollary \ref{c2}, if in addition $X,Y$ are of class $C^\infty$, $f \in C^\infty(\overline X)$, and $g \in C^\infty(\overline Y)$, then $u\in C^\infty(\overline{X}).$
This follows from  the standard regularity theory for linear uniformly elliptic equation with oblique boundary condition, for instance see 
\cite[Theorem 6.31]{GT}. The second part of the corollary follows as in the proof of Corollary \ref{c1}.

\end{remark}

\section{Proof of the corollaries}

\noindent\emph{Proof of Corollary \ref{c1}.}  
Without loss of generality we assume $\lambda_1=\lambda_2=1$ (the general case being completely analogous). Let $v$ be the $c$-convex function associated to the optimal transport problem between 
$\textbf{1}_X$ and $\textbf{1}_Y$ with cost $-x\cdot y$.
Recall that $v$ is of class $C^{2,\alpha}$ up to the boundary (see Theorem \ref{t0}).

Given a point $x_0\in \partial{X}$,  
let $y_0=\nabla v(x_0)$. 
By \cite[Proposition 2.1]{CGN}, after an affine transformation and a translation of coordinates we can assume that $x_0=y_0=\0$, $X\subset \{x^n\geq 0\}$, $Y\subset \{y^n\geq 0\}$, $D^2v(\0)=\text{Id},$ 
and (up to subtracting a constant) $v(\0)=u(\0)=0$.

Now, by \cite[Proposition 4.1]{CGN} (see also \cite[Lemma 4.1]{DF}) we have that 
\begin{equation}\label{901}
\|u-v\|_{L^\infty(X)}\leq \omega(\delta)\rightarrow 0\quad \text{as}\  \delta\rightarrow 0.
\end{equation}
Since $D^2v(\0)=\text{Id},$ 
 and $v$ is of class $C^{2,\alpha}$ up to the boundary, for $h>0$ small the sub-level sets 
of $v$ (recall the notation \eqref{eq:Su}) satisfy
$$ X\cap B_{\frac{2\sqrt{h}}{3}}\subset S_v(h)\subset X\cap B_{\frac{3\sqrt{h}}{2}},$$
and $$ Y\cap B_{\frac{2\sqrt{h}}{3}}\subset \partial^-v(S_v(h))\subset Y\cap B_{\frac{3\sqrt{h}}{2}}.$$
By \eqref{902} and \eqref{901} it is easy to check that, for $h>0$  fixed, provided $\delta$ is sufficiently small
$u$ also satisfies similar properties as follows:
\begin{equation}\label{903}
 X\cap B_{\sqrt{h}/2}\subset S_u(h)\subset X\cap B_{2\sqrt{h}},
 \end{equation}
and 
\begin{equation}\label{904}
Y\cap B_{\sqrt{h}/2}\subset \partial_cu(S_u(h))\subset Y\cap B_{2\sqrt{h}}.
\end{equation}

 Then, we perform
the change of variables
\[
\left\{
\begin{array}
    {l@{\quad}}
   \tilde{x}:=\frac{x}{\sqrt{h}} \\
   \tilde{y}:=\frac{y}{\sqrt{h}}
\end{array}
\right.
\]
and we set
$$\tilde{c}(\tilde{x},\tilde{y}):=\frac1h c(\sqrt{h}\tilde{x}, \sqrt{h}\tilde{y}),\qquad
\tilde{u}(\tilde x):=\frac1h u(\sqrt{h}\tilde{x}).$$
Note that, after this change of variables, $X$ (resp. Y) becomes $\frac{1}{\sqrt{h}} X$ (resp. $\frac{1}{\sqrt{h}}Y)$. Hence the
boundary part $\partial(\frac{1}{\sqrt{h}} X)\cap B_2$ (resp. $\partial(\frac{1}{\sqrt{h}} Y)\cap B_2$) becomes flatter and flatter as $h \to 0$, and in particular it will satisfy \eqref{401} provided $h$ is small enough.  Combining this with
\eqref{902}, \eqref{901}, \eqref{903}, and \eqref{904},  we see that $\tilde{u}$ satisfies all the conditions in Theorem \ref{t1}, hence, $\tilde{u}$
is $C^{1,\beta}$ in a neighborhood of $\0$.  

When initially $x_0$ is in the interior, the argument is similar, the only difference is that instead of using our Theorem \ref{t1} we use its interior 
version by De Philippis and Figalli (see \cite[Theorem 4.3]{DF}). 
Then the proof of the first statement is completed by a standard covering argument. \\

In the case when $c(x,y)=\frac1p|x-y|^p$ with $p>1$ is suffices to observe that,
in both cases, after subtracting $\frac{1}{2}|x|^2+\frac{1}{2}|y|^2$ to $c$ (that does not change the optimal transport problem, see \cite[Section 3.1]{DFreview} or \cite{DF} for more comments on this point) one has
$$
\|c+x\cdot y\|_{C^2(X\times Y)} \to 0 \qquad \text{as $\bar\delta \to 0$}
$$
(see \cite{CGN} for more details). Hence, since $c$ is smooth when $x \neq y$,  the result follows from the first part of the corollary.\qed
\vspace{1em}

\noindent\emph{Proof of Corollary \ref{c2}.} 
We only need to slightly modify the proof of Corollary \ref{c1}.
Let $v$ be the potential function to the optimal transport problem from
$f \textbf{1}_X$ to $g \textbf{1}_Y$   with cost $-x\cdot y$.
Since $f$ and $g$ are of class $C^\alpha$, Caffarelli's boundary $C^{2,\alpha}$ estimate still applies. Using the same argument as 
in the proof of Corollary \ref{c1}, we still have \eqref{902} (when the cost is $c(x,y)=\frac1p|x-y|^p$ with $p>1$), \eqref{901}, \eqref{903}, \eqref{904}, and flatness of the boundary. Hence
all the conditions in Theorem \ref{t2} are satisfied. Therefore $u$ is $C^{2,\alpha}_{\rm loc}\cap C^{2,\alpha'}$ with $\alpha' \in (0,\alpha)$ 
in a small neighborhood of $x_0$, for any $x_0$ on the boundary of $X$. Combining this with the interior $C^{2,\alpha}$ result 
of \cite[Theorem 5.3]{DF} we conclude that $u$ is $C^{2,\alpha}$ in the interior of $X$ and  $C^{2,\alpha'}$ up to the boundary, provided $\delta_0$ is sufficiently small. 
\qed

\section{Proof of the main theorems}

\vspace{1em}

\noindent\emph{Proof of Theorem \ref{t1}.} We divide the proof into several steps.
\vspace{1em}

\noindent$\bullet$ \emph{Step 1: A first change of variables.}
For $x_0\in \mathcal{C}_1\cap B_{\rho_0}$ with $\rho_0\ll1$ to be chosen, take $y_0\in\partial_cu(x_0)$.
Then we perform a change of variables $\bar{x}:=x-x_0$, $\bar{y}:=y-y_0$,
and we define
\begin{equation}\label{e01}
\bar{c}(\bar{x},\bar{y}):=c(x,y)-c(x,y_0)-c(x_0,y)+c(x_0,y_0),
\end{equation}
\begin{equation}\label{e02}
\bar{u}(\bar{x}):=u(x)+c(x,y_0)-c(x_0,y_0)-u(x_0),
\end{equation}
$$\bar{f}(\x):=f(\x+x_0), \qquad \bar{g}:=g(\y+y_0).$$

First we show that, in the new coordinates,
\begin{equation}\label{001}
\|\bar{c}(\bar{x},\bar{y})+\bar{x}\cdot \bar{y}\|_{C^2(\mathcal{C}_1\times\mathcal{C}_2)}\leq 4\delta_0=:\tilde{\delta}_0,
\end{equation}
\begin{equation}\label{002}
\|\bar{u}(\bar{x})-1/2|\bar{x}|^2\|_{L^\infty(\mathcal{C}_1)}\leq K(\sqrt{\eta_0}+\delta_0)=:\tilde{\eta}_0.
\end{equation}

For this, notice that \eqref{001} follows from \eqref{403} easily, so we only need to verify \eqref{002}.
To this aim, we define
 $$
 p_{x_0}:=-D_xc(x_0,y_0)\in \partial^-u(x_0).
 $$
We claim that, for any direction $\textbf{e}$, if $x_0+t\textbf{e}\in \mathcal{C}_1\cap B_{1/2}$ for $0\leq t \leq \sqrt{\eta_0}$ then $(p_{x_0}-x_0)\cdot \textbf{e}\leq K \sqrt{\eta_0}$ for some universal constant $K$. Notice that $u$ is semiconvex, namely, there exists a constant $C$ (depending only on
$\|c\|_{C^2}$) such that $w(x):=u(x)-\frac{1}{2}|x|^2+C|x-x_0|^2$ is convex.
 Since $p_{x_0}-x_0\in \partial^-w(x_0)$,
by convexity and \eqref{eq:uP} we have
\begin{eqnarray*}
(p_{x_0}-x_0)\cdot \textbf{e}&\leq&\frac{w(x_0+\sqrt{\eta_0}\textbf{e})-w(x_0)}{\sqrt{\eta_0}}\\
&\leq& \frac{2\eta_0+C\eta_0}{\sqrt{\eta_0}}=(2+C)\sqrt{\eta_0}.
\end{eqnarray*}
Hence the claim follows with $K:=C+2$.

We now notice that, by \eqref{403},
$$|p_{x_0}-y_0|\leq \|D_xc+\text{id}\|_{L^\infty(\mathcal{C}_1\times\mathcal{C}_2)}\leq \delta_0,$$ therefore,
\begin{equation}\label{501}
(y_0-x_0)\cdot \textbf{e}=(y_0-p_{x_0})\cdot \textbf{e}+ (p_{x_0}-x_0)\cdot \textbf{e}\leq K\sqrt{\eta_0}+\delta_0.
\end{equation}
Now we consider two cases.

\textit{- Case 1: $d(x_0, \{x^n=P(x')\})\geq 2\sqrt{\eta_0}+2\delta_0$.} In this case, we can use any $\textbf{e}\in\mathbb{S}^n$ in \eqref{501},
hence $|y_0-x_0|\leq K\sqrt{\eta_0}+\delta_0$.

\textit{- Case 2:  $d(x_0, \{x^n=P(x')\})\leq 2\sqrt{\eta_0}+2\delta_0$.} For this case, we can still apply \eqref{501} with
$\textbf{e}$ satisfying $\textbf{e}\cdot (0,\cdots,0,1)\geq 1/2$, provided $\delta_0$ is small. 
Combining this with the fact that $y^n\geq -2\delta_0$ (see \eqref{inclusion} and \eqref{401}), we also have
$|y_0-x_0|\leq K(\sqrt{\eta_0}+\delta_0),$ where $K$ needs to be enlarged by a universal constant.

Hence, in both cases 
\begin{equation}
\label{eq:y0}
|y_0-x_0|\leq K\bigl(\sqrt{\eta_0}+\delta_0\bigr),
\end{equation}
and using \eqref{403} and \eqref{eq:uP} we get
\begin{eqnarray*}
\biggl|\bar{u}(\x)-\frac{1}{2}|\x|^2\biggr|&=&\biggl|u(x)-c(x,y_0)+c(x_0,y_0)-u(x_0)-\frac{1}{2}|x-x_0|^2\biggr|\\
&\leq& \biggl|u(x)-\frac{|x|^2}{2}\biggr|+\biggl|u(x_0)-\frac{|x_0|^2}{2}\biggr|\\
  &+&|c(x,y_0)+x\cdot x_0|+|c(x_0,y_0)+x_0\cdot x_0|\\
&\leq& 2\eta+(|x|+|x_0|)|y_0-x_0|+2\delta_0
\leq K(\sqrt{\eta_0}+\delta_0)
\end{eqnarray*}
for some universal constant $K$, as desired.
This concludes the proof of \eqref{002}.

Recall that by assumption $|x_0| \leq \rho_0$ and \eqref{eq:y0} holds, hence (provided $\rho_0$, $\eta_0$, and $\delta_0$ are sufficiently small) we have
$B_{3}(x_0) \supset B_2$, $B_{3}(y_0) \supset B_2$, $B_{1/3}(x_0)\subset B_{1/2}$, 
$B_{1/3}(y_0)\subset B_{1/2}$.
Also, in these new coordinates, the lower part of boundary of $\mathcal{C}_1\cap B_{1/3}$
 (resp. $\mathcal{C}_2\cap B_{1/3}$) is defined by $\bar{P}$ (resp. $\bar{Q}$), and the graph of  $\bar{P}$ (resp. $\bar{Q}$) is only a translation of the graph of
 $P$ (resp. $Q$) by $x_0$ (resp. $y_0$) Notice that, since $x_0$ and $y_0$
 are not necessarily boundary points, it is not true in general that $ \bar P(\textbf{0})=\bar Q(\textbf{0})=0$ nor that $\nabla \bar P(\textbf{0})=\nabla \bar Q(\textbf{0})=\textbf{0}$.
 
%
  \vspace{1em}

\noindent$\bullet$ \emph{Step 2: $\bar{u}$ is close to a strictly convex solution of the Monge-Amp\`ere equation.}
In this step, we approximate $\bar{u}$ by the solution of an optimal transport problem with quadratic cost. This step consists of two lemmas: Lemma
\ref{l0} is about the construction of the approximating solution, and Lemma \ref{l1} is devoted to the smoothness of the approximating solution.
.

\begin{lemma}\label{l0}
Let $\delta>0$, and let $\mathcal{C}_1$ and $\mathcal{C}_2$  be two closed sets such that
\begin{equation}\label{011}
B_{1/K}\cap \{x^n\geq -d_i+\delta\}\subset \mathcal{C}_i\subset B_{K}\cap \{x^n\geq -d_i\},\ \text{for}\ i=1, 2,
\end{equation}
where $0\leq d_i<\frac{1}{10}.$ Let $\tilde{\mathcal{C}}_i:=\mathcal{C}_i\cup (B_{1/K}\cap \{x^n\geq -d_i\}).$
Suppose $f$ and $g$ are two densities supported respectively in $\mathcal{C}_1$ and $\mathcal{C}_2$, and $u:\mathcal{C}_1\rightarrow \mathbb{R}$ is a
$c$-convex functions such that $\partial_cu(\mathcal{C}_1)\subset  B_{K}\cap \{x^n\geq -d_2\}$ and $(T_u)_\sharp f=g.$ Let $\varrho>0$ be such that
$|\tilde{\mathcal{C}}_1|=|\varrho \tilde{\mathcal{C}}_2|$ (where $\varrho \tilde{\mathcal{C}}_2$ denotes the dilation of $\tilde{\mathcal{C}}_2$ with respect
to the origin), and let $v$ be a convex function such that $(\nabla v)_\sharp \textbf{1}_{\tilde{\mathcal{C}}_1} =\textbf{1}_{\varrho \tilde{\mathcal{C}}_2}$
and $v(\0)=u(\0)$. Then there exists an increasing function $\omega: \mathbb{R}^+\rightarrow \mathbb{R}^+,$ depending only on $K$ and satisfying
$\omega(\delta)\geq \delta$ and $\omega(0^+)=0,$ such that, if
\begin{equation}\label{012}
\|f-\textbf{1}\|_{L^\infty({\mathcal{C}_1})}+\|g-\textbf{1}\|_{L^\infty({\mathcal{C}_2})}\leq \delta
\end{equation}
and
\begin{equation}\label{013}
\|c(x,y)+x\cdot y\|_{C^2(\mathcal{C}_1\times\mathcal{C}_2)}\leq \delta,
\end{equation}
then
$$\|u-v\|_{L^\infty(\mathcal{C}_1\cap B_{1/K})}\leq \omega(\delta).$$
\end{lemma}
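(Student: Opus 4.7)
The plan is to prove this stability statement by a contradiction-and-compactness argument, in the spirit of the analogous interior result of De Philippis--Figalli \cite{DF}. Assume the conclusion fails: then there exist $\delta_0 > 0$ and sequences $\delta_k \to 0^+$, together with sets $\mathcal{C}_{i,k}$ (with corresponding $d_{i,k}$ and $\tilde{\mathcal{C}}_{i,k}$), densities $f_k, g_k$, costs $c_k$, $c_k$-convex potentials $u_k$, dilations $\varrho_k$, and convex potentials $v_k$ satisfying all the hypotheses with $\delta$ replaced by $\delta_k$, yet
\[
\|u_k - v_k\|_{L^\infty(\mathcal{C}_{1,k} \cap B_{1/K})} \geq \delta_0.
\]

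Next, I would set up compactness and extract limits. Passing to subsequences, $d_{i,k} \to d_{i,\infty} \in [0, 1/10]$ and $\mathcal{C}_{i,k} \to \mathcal{C}_{i,\infty}$ in Hausdorff distance, with the limit sandwiched between $B_{1/K} \cap \{x^n \geq -d_{i,\infty}\}$ and $\overline{B}_K \cap \{x^n \geq -d_{i,\infty}\}$. Since $|\tilde{\mathcal{C}}_{i,k} \setminus \mathcal{C}_{i,k}| \leq C\delta_k \to 0$, the enlarged sets $\tilde{\mathcal{C}}_{i,k}$ share the same limit. The mass-balance identity $\int_{\mathcal{C}_{1,k}} f_k = \int_{\mathcal{C}_{2,k}} g_k$, combined with $\|f_k - \textbf{1}\|_\infty, \|g_k - \textbf{1}\|_\infty \to 0$, forces $|\mathcal{C}_{1,\infty}| = |\mathcal{C}_{2,\infty}|$, so $\varrho_k \to 1$. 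The uniform bound $\|c_k\|_{C^2} \leq K + \delta_k$ makes each $u_k$ semiconvex with a uniform modulus, while $\partial_{c_k} u_k \subset \overline{B}_K \cap \{x^n \geq -d_{2,k}\}$ yields a uniform Lipschitz estimate; the convex potentials $v_k$ obey the analogous bound from $\partial v_k(\tilde{\mathcal{C}}_{1,k}) \subset \varrho_k \tilde{\mathcal{C}}_{2,k}$. Using the shared normalization $u_k(\textbf{0}) = v_k(\textbf{0})$ (extending $u_k$ to $\tilde{\mathcal{C}}_{1,k}$ by its $c_k$-convex envelope if $\textbf{0} \notin \mathcal{C}_{1,k}$), Arzel\`a--Ascoli produces uniform limits $u_k \to u_\infty$ and $v_k \to v_\infty$ on $\overline{B}_{1/K}$.

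The crux is to identify $u_\infty$ and $v_\infty$ as \emph{the same} Brenier potential of the limiting transport problem from $\textbf{1}_{\mathcal{C}_{1,\infty}}$ to $\textbf{1}_{\mathcal{C}_{2,\infty}}$ with cost $-x\cdot y$. By standard stability of optimal transportation under $C^0$-convergence of the cost and weak-$*$ convergence of the source/target measures (see, e.g., \cite{C1,GM}), $u_\infty$ is convex and $\nabla u_\infty$ is the Brenier map between these uniform measures; the same reasoning applies to $v_\infty$ using $\tilde{\mathcal{C}}_{1,k} \to \mathcal{C}_{1,\infty}$ and $\varrho_k \tilde{\mathcal{C}}_{2,k} \to \mathcal{C}_{2,\infty}$. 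Brenier's uniqueness theorem forces $u_\infty \equiv v_\infty + \text{const}$, and the normalization at the origin kills the constant, contradicting $\|u_k - v_k\|_\infty \geq \delta_0$. Finally, $\omega$ is obtained as the nondecreasing rearrangement (plus the trivial bound $\omega(\delta) \geq \delta$) of the quantitative rate this compactness argument delivers.

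The main obstacle I anticipate is justifying the passage to the limit in the transport condition $(T_{u_k})_\sharp f_k = g_k$ when the cost degenerates from $c_k$ to $-x\cdot y$, the densities are only approximately uniform, and the limiting domains $\mathcal{C}_{i,\infty}$ have only Lipschitz boundary (no convexity). This is most cleanly handled in Kantorovich (dual) form: every uniform cluster point of the sequence of $c_k$-convex potentials is a Kantorovich potential for the limit problem, which by Brenier is unique up to constants. A secondary delicate point is controlling $u_k$ and $v_k$ on the thin layer $\tilde{\mathcal{C}}_{1,k} \setminus \mathcal{C}_{1,k}$ so that the $L^\infty$ comparison passes to the limit; the $c_k$-convex extension combined with the Lipschitz bound suffices, since this layer has vanishing measure.
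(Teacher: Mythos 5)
Your proposal follows essentially the same route as the paper's proof: a contradiction/compactness argument with uniform Lipschitz bounds and extensions so that Arzel\`a--Ascoli applies, the observation $\varrho_k\to1$, stability of optimal transport (the paper invokes \cite[Theorem 5.20]{V1}) to identify both uniform limits as potentials of the same limiting problem with cost $-x\cdot y$, and uniqueness of the optimal (Brenier) map together with the normalization at the origin to reach a contradiction. The only minor deviation is that the paper works directly with the weak-$*$ limits $f_\infty,g_\infty$ of the densities and only needs the lower bound $f_\infty\geq \textbf{1}_{B_{1/K}\cap\{x^n\geq -d_1^\infty\}}$ (so that the gradients agree a.e. on that connected region), rather than identifying the limit measures with indicators of Hausdorff-limit sets, which is cleaner since Hausdorff convergence of the sets alone does not yield convergence of their indicator functions.
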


\begin{proof} The proof of this lemma is similar to the proof of \cite[Lemma 4.1]{DF}. For reader's convenience, we include the details here.
We prove the lemma by contradiction. Assume the lemma is false. Then there exists $\epsilon_0>0$, a sequence 
of closed sets $\mathcal{C}_1^m$, $\mathcal{C}_2^m$ satisfying \eqref{011}, $0\leq d_i^m\leq 1/10$ for $i=1,2$, 
functions $f_m$, $g_m$ satisfying \eqref{012} with
$\delta=1/m$, and costs $c_m$ converging in $C^2$ to $-x\cdot y$, such that
$$u_m(\0)=v_m(\0)=0,\ \text{and}\ \|u_m-v_m\|_{L^\infty(\mathcal{C}^m_1\cap B_{1/K})}\geq \epsilon_0,$$ where 
$u_m$ and $v_m$ are as in the statement.  Note that after passing to a subsequence we can assume
$d_i^m\rightarrow d_i^\infty$ as $m\rightarrow \infty,$ for $i=1,2$. Now we extend $u_m$ and $v_m$ to $B_K$ as 
$$u_m(x):=\underset{z\in \mathcal{C}_1^m, y\in\partial_{c_m}u_m(z)}{\sup}\{u_m(z)-c_m(x,y)+c_m(z,y)\},$$
$$v_m(x):= \underset{z\in \mathcal{C}_1^m, p\in\partial^-v_m(z)}{\sup}\{v_m(z)+p\cdot (x-z)\}.$$
Note that $(T_{u_m})_\sharp f_m=g_m$ gives that $\int f_m=\int g_m,$ so it follows from \eqref{011} and \eqref{012} that
$$\varrho_m=\left(\frac{|\tilde{\mathcal{C}}^m_1|}{|\tilde{\mathcal{C}}^m_2|}\right)^{1/n}\rightarrow 1\quad \text{as}\ m\rightarrow \infty,$$ which implies
that $\partial^-v_m(B_K)\subset B_{\varrho_mK}\subset B_{2K}.$ Thus, since the $C^1$ norm of $c_m$ is uniformly bounded, we deduce that
both $u_m$ and $v_m$ are uniformly Lipschitz. By the assumption that $u_m(\0)=v_m(\0)=0,$ passing to a subsequence, we have that $u_m$ and $v_m$ 
uniformly converge inside $B_K$ to $u_\infty$ and $v_\infty$ respectively, where

\begin{equation}\label{021}
u_\infty(\0)=v_\infty(\0)=0\ \ \ \text{and}\ \ \ \|u_\infty-v_\infty\|_{L^\infty\left(B_{1/K}\cap \{x^n\geq -d^\infty_1\}\right)}\geq \epsilon_0.
\end{equation}

Moreover we have that $f_m$ (resp. $g_m$) weak-$*$ converges in $L^{\infty}$ to some density $f_\infty$ (resp. $g_\infty$). Also, since $\varrho_m\rightarrow {1},$ by \eqref{012} we have that $\textbf{1}_{\mathcal{C}_1}$ (resp. $\textbf{1}_{\varrho \tilde{\mathcal{C}}_2}$)
weak-$*$ converges in $L^\infty$ to $f_\infty$ (resp. $g_\infty$). Note that by \eqref{012} and the fact that $\mathcal{C}_1^m \supset B_{1/K}\cap \{x^n\geq -d^m_i+\delta\}$, we also have that $f_\infty\geq \textbf{1}_{B_{1/K}\cap\{x^n\geq -d_1^\infty\}}.$ 

Now, we apply \cite[Theorem 5.20]{V1} to deduce that both $\nabla u_\infty$ and $\nabla v_\infty$ are optimal transport maps for the cost $-x\cdot y$ sending $f_\infty$ onto
$g_\infty.$ By uniqueness of optimal map, we have that $\nabla u_\infty=\nabla v_\infty$ almost everywhere inside $B_{1/K}\cap\{x^n\geq -d_1^\infty\}\subset
\text{spt}\ f_\infty,$ hence (since $u_\infty(\0)=v_\infty(\0)=0$) $u_\infty=v_\infty$ in $B_{1/K}\cap\{x^n\geq -d_1^\infty\}$, contradicting to \eqref{021}.
\end{proof}

Denote $\tilde{\mathcal{C}}^+_1:= \mathcal{C}_1\cup (B_{1/3}\cap \{\x^n\geq -x_0^n-2\tilde{\delta}_0\})$ and $\tilde{\mathcal{C}}^+_2:=\mathcal{C}_2 \cup(B_{1/3}\cap \{\y^n\geq -y_0^n-2\tilde{\delta}_0\})$ (notice that by \eqref{inclusion} and \eqref{401}, the inclusions $B_{1/3}\cap \{\x^n\geq -x_0^n+2\tilde{\delta}_0\}\subset \mathcal{C}_1\subset B_{3}\cap \{\x^n\geq -x_0^n-2\tilde{\delta}_0\}$ and $B_{1/3}\cap \{\y^n\geq -x_0^n+2\tilde{\delta}_0\}\subset \mathcal{C}_2\subset B_{3}\cap \{\y^n\geq -x_0^n-2\tilde{\delta}_0\}$ hold).

Now let $\varrho$ be such that $|\tilde{\mathcal{C}}^+_1|= |\varrho\tilde{\mathcal{C}}^+_2 |$ (where $\varrho\tilde{\mathcal{C}}^+_2 $ is the dilation of $\tilde{\mathcal{C}}^+_2$ with respect to the origin), and let $v$ be a convex function such that $\nabla v_\sharp \textbf{1}_{\tilde{\mathcal{C}}^+_1}=\textbf{1}_{\tilde{\mathcal{C}}^+_2}$ and $v(\textbf{0})=\bar u(\textbf{0})$=0.
 By \eqref{001} and Lemma \ref{l0}
 \begin{equation}\label{e1}
 \|\bar u-v\|_{L^\infty(\{\x^n\geq \bar{P}\}\cap B_{1/3})}\leq \omega(\tilde\delta_0),
  \end{equation}
  where $\omega:\mathbb{R}^+\rightarrow \mathbb{R}^+$ satisfies $\omega(\delta)\geq \delta$ and $\omega(0^+)=0$.

Next, we use a symmetrization trick. Let $\tilde{\mathcal{C}}_1^-$ (resp. $(\varrho\tilde{\mathcal{C}}_2)^-$) be the reflection of $\tilde{\mathcal{C}}^+_1$ (resp. $\varrho\tilde{\mathcal{C}}^+_2$)  with respect to the hyperplane $\{\x^n=-x_0^n-2\tilde{\delta}_0\}$ (resp. $\{\y^n=\varrho (-y_0^n-2\tilde{\delta}_0)\})$, and denote $\tilde{\mathcal{C}}_1:=\tilde{\mathcal{C}}^+_1\cup \tilde{\mathcal{C}}_1^-$ and
$\tilde{\mathcal{C}}_2:=\varrho\tilde{\mathcal{C}}^+_2\cup (\varrho \tilde{\mathcal{C}}_2)^-$. Let $\tilde{v}$ be the convex potential of the optimal transportation from
$\textbf{1}_{\widetilde{\mathcal{C}}_1}$ to $\textbf{1}_{\widetilde{\mathcal{C}}_2}$, with $\s(\textbf{0})=0$. Then $\tilde{v}$ enjoys the following nice properties.

\begin{lemma}\label{l1}
 $\tilde{v}|_{\tilde{\mathcal{C}}^+_1}=v$, and $\tilde{v}$ is smooth inside $B_{1/10}$ with $\|\tilde{v}\|_ {C^3(B_{1/10})}\leq K$.
\end{lemma}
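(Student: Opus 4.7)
I would prove the two assertions separately, establishing the restriction identity first and then using the symmetry it provides to obtain uniform interior regularity.

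For $\tilde v|_{\tilde{\mathcal C}^+_1}=v$, the plan is to apply Brenier's uniqueness to a candidate built from reflection. Let $\sigma_1$ be the Euclidean reflection of $\mathbb R^n$ across $H_1:=\{\bar x^n=-x_0^n-2\tilde\delta_0\}$, and $\sigma_2$ the reflection across $H_2:=\{\bar y^n=\varrho(-y_0^n-2\tilde\delta_0)\}$; by construction $\sigma_i(\tilde{\mathcal C}_i)=\tilde{\mathcal C}_i$, so Lebesgue measure on each symmetrized set is preserved. I would then define $w(x):=\tilde v(\sigma_1(x))+\ell(x)$, where $\ell$ is the unique affine function chosen so that $\nabla w=\sigma_2\circ\nabla\tilde v\circ\sigma_1$ and $w(\textbf{0})=\tilde v(\textbf{0})$. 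A direct Hessian computation gives $D^2w(x)=D\cdot D^2\tilde v(\sigma_1(x))\cdot D$ with $D=\mathrm{diag}(1,\ldots,1,-1)$, so $w$ is convex, and its gradient transports $\textbf{1}_{\tilde{\mathcal C}_1}$ to $\textbf{1}_{\tilde{\mathcal C}_2}$. Brenier's uniqueness then forces $w=\tilde v$, which gives a functional symmetry identity implying $\nabla\tilde v(H_1)\subset H_2$ and therefore that $\nabla\tilde v$ sends $\tilde{\mathcal C}^+_1$ bijectively onto $\varrho\tilde{\mathcal C}^+_2$. Consequently $\tilde v|_{\tilde{\mathcal C}^+_1}$ is itself a convex Brenier potential for $\textbf{1}_{\tilde{\mathcal C}^+_1}\to\textbf{1}_{\varrho\tilde{\mathcal C}^+_2}$, and by uniqueness with the common normalization $v(\textbf{0})=\tilde v(\textbf{0})=0$ it must coincide with $v$.

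For the uniform $C^3$ bound, the plan is to reduce to interior regularity for Monge--Amp\`ere with smooth data. A direct geometric check using the hemiball structure of $\tilde{\mathcal C}^+_i$ and the smallness of $\rho_0,\tilde\delta_0$ shows that both symmetrized domains contain the ball $B_{1/4}$ around the origin, so $\tilde v$ solves $\det D^2\tilde v=1$ in the Alexandrov sense on $B_{1/4}$. Combining \eqref{e1} with \eqref{002} and the symmetry identity of the first part, I would deduce $\|\tilde v-\tfrac12|x|^2\|_{L^\infty(B_{1/4})}\to 0$ as $\delta_0,\eta_0\to 0$. Any line segment in $B_{1/5}$ on which a convex $\tilde v$ were affine must extend to $\partial B_{1/4}$, so it would have length bounded below by $1/20$; its affine profile could not stay within $o(1)$ of the strictly convex quadratic $\tfrac12|x|^2$, a contradiction. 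This yields strict convexity of $\tilde v$ on $B_{1/5}$, after which Caffarelli's interior $C^{1,\alpha}$ theory, Pogorelov's $C^{2,\alpha}$ estimate, and standard Schauder bootstrap for Monge--Amp\`ere equations with constant right-hand side deliver the required uniform $C^3$ bound on $B_{1/10}$.

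The main obstacle is establishing strict convexity of $\tilde v$: Caffarelli's classical strict convexity theorem requires a globally convex target, whereas the symmetrized reflection domain $\tilde{\mathcal C}_2$ is only a union of reflected pieces and is not globally convex. My plan circumvents this by the purely quantitative argument above, trading the global geometric hypothesis for the $L^\infty$ smallness provided by Lemma~\ref{l0} combined with the symmetry identity from the first part.
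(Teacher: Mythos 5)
Your proof of the restriction identity $\tilde v|_{\tilde{\mathcal C}^+_1}=v$ is essentially the paper's argument: the paper reflects the optimal \emph{map} and invokes uniqueness of optimal maps, while you reflect the \emph{potential} and invoke uniqueness of Brenier potentials; these are the same symmetry-plus-uniqueness idea, and your version is fine (modulo the small glossed step, common to both, that symmetry plus monotonicity of $\nabla\tilde v$ forces the upper piece to be sent to the upper piece).

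The second half has a genuine gap. You assert that because both symmetrized domains contain $B_{1/4}$, the potential $\tilde v$ solves $\det D^2\tilde v=1$ in the Alexandrov sense on $B_{1/4}$. That inference is not valid: a Brenier potential whose target is \emph{not convex} satisfies in general only a one-sided bound, because for $E\subset B_{1/4}$ the set $\partial^-\tilde v(E)$ may sweep across portions of the complement of the non-convex target $\tilde{\mathcal C}_2$, which carry Lebesgue measure but no mass of the target density, so the Monge--Amp\`ere measure of $E$ can strictly exceed $|E|$. The symmetrized target here is exactly of this non-convex type. You flagged the non-convexity of the target as an obstacle for \emph{strict convexity}, but it already threatens the equation itself, and the two-sided Alexandrov bound is precisely what your contact-set argument (Caffarelli's theorem that contact sets of solutions with Monge--Amp\`ere measure bounded above and below have no interior extreme points) and the subsequent Pogorelov--Schauder steps presuppose. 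The missing step --- which is exactly what the paper supplies --- is to use first the $L^\infty$ closeness of $\tilde v$ to $\frac12|x|^2$ (your combination of \eqref{e1}, \eqref{002} and the symmetry identity) together with convexity to confine the subdifferential image of a smaller ball inside a ball contained in the target where the density is $1$, namely $\partial^-\tilde v(B_{1/10})\subset B_{1/6}$; only then is $\tilde v$ a localized Alexandrov solution with right-hand side $1$ near the origin, after which the argument as in \cite{FK} (strict convexity from closeness to the paraboloid, then Caffarelli's interior theory, Pogorelov and Schauder) yields the uniform $C^3$ bound on $B_{1/10}$ as you describe. This fix uses only estimates you already have, so your outline closes once that step is inserted; as written, however, the claim ``Alexandrov solution on $B_{1/4}$'' is unjustified.
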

\begin{proof}
 To prove the claim, it is more convenient to translate both coordinates $\x$ and $\y$ so that both the center of $B_{1/3}\cap \{\x^n= -x_0^n-2\tilde{\delta}_0\}$ and the center of $B_{1/3}\cap\{\y^n=\varrho (-y_0^n-2\tilde{\delta}_0)\})$ are located at the origin. For simplicity, we still use $\x$ and $\y$ to denote the new
variables.

 Then, the first part of the claim follows because the uniqueness of optimal transport map
 implies that it must be symmetric. Indeed, being the densities symmetric  with respect to the hyperplanes $\{\bar x^n=0\}$
 and $\{\bar y^n=0\}$, if $T=(T',T^n):\R^n\to\R^n$ is optimal for the cost $-x\cdot y$
 then the map $\hat T(\bar x',\bar x^n):=\bigl(T'(\bar x',-\bar x^n),-T^n(\bar x',-\bar x^n)\bigr)$
 is still optimal (as it has the same transportation cost of $T$), so by uniqueness $\hat T=T$.
 This proves that $\nabla \tilde v
 |_{\tilde{\mathcal{C}}^+_1}=\nabla v$, and because $\tilde v(\textbf{0})=v(\textbf{0})$ the result follows.
 
 For the second part of the claim notice that, by \eqref{002}
 and 
\eqref{e1}, 
 $$\|\tilde{v}-1/2|\x|^2\|_{L^\infty(B_{1/3}\cap \{ |\x^n|\geq \delta_0\})}\leq \omega(\tilde\delta_0)+\tilde\eta_0.$$  
Now by convexity  of $v$ one can easily show that $\partial^{-} \tilde{v}(B_{1/10})\subset B_{1/6}$,
so arguing as in \cite{FK} we have that $\tilde{v}$ is smooth inside $B_{1/8}$, with $\|\tilde{v}\|_ {C^3(B_{1/8})}\leq K$.
Recalling that $\bar x=x-x_0$ and $\bar y=y-y_0$ with $|x_0| \leq \rho_0$ and $|y_0|\leq \rho_0+K\bigl(\sqrt\eta_0 +\delta_0\bigr)$ (see \eqref{eq:y0}), provided $\rho_0,\delta_0,\eta_0$ are sufficiently small we see that, in the original $(\bar x,\bar y)$ coordinates, $\tilde{v}$ is smooth inside $B_{1/10}$ with $\|\tilde{v}\|_ {C^3(B_{1/10})}\leq K$.
\end{proof}

Next, we compute the hessian of $\tilde{v}$ at $\hat x_1=(0,\cdots, 0, -x^n_0-2\tilde{\delta}_0)$ and the origin.
First note that the $C^3$ bound of $\tilde{v}$ implies that $ \text{Id}/K \leq D^2\tilde{v}\leq K \text{Id}.$
By symmetry we have that
$\nabla_n\tilde{v}$ is constant on $\{\x^n=-x^n_0-2\tilde{\delta}_0\}$, which implies
\begin{equation}\label{m1}
\tilde{v}_{ni}(\hat x_1)=0, \ \text{for}\ 1\leq i\leq n-1.
\end{equation}
 Since $|x_0|\leq \rho_0,$
 by the $C^3$ bound of $\tilde{v}$ we have that
 \begin{equation}\label{m2}
 \s_{ij}(\textbf{0})= \tilde{v}_{ij}(\hat x_1)+O(\rho_0)+O(\tilde{\delta}_0), 1\leq i,j\leq n,
 \end{equation}
so, in particular, $\s_{ni}(\textbf{0})=O(\rho_0)+O(\tilde{\delta}_0)$ for all $i=1,\ldots,n-1$.
\vspace{1em}

\noindent$\bullet$ \emph{Step 3: Initial step for the iteration.} 
In the next Lemma we
will show that there exists an affine transformation such that $\bar u\circ A$
satisfies all the properties in the list:\\1. both the sub-level set $\{\bar{u}\leq h_0\}$ and its image are comparable to $B_{\sqrt{h_0}}$;\\ 2. $\bar{u}$ will be very close to $\frac{|\x|^2}{2}$;\\
 3. both $\|A\|$ and $\|A^{-1}\|$ are bounded by some universal constant.

\begin{lemma}\label{l2}
For every $\tilde{\eta}_0$ small, there exist small positive constants $h_0$, $\tilde{\delta}_0$ for which the following holds:
there exists a symmetric matrix $A$ satisfying $\|A\|$, $\|A^{-1}\|\leq K_1$ and $\det A=1$ such that
\begin{eqnarray*}
A\left(B_{\sqrt{\frac{h_0}{3}}}(\textbf{0}) \right)\cap \{\x^n\geq \bar{P}\} \subset S_{\bar{u}}(h_0):=\{\bar{u}\leq h_0\}\subset
A\left(B_{\sqrt{3h_0}}(\textbf{0})\right)\cap \{\x^n\geq \bar{P}\},\\
A^{-1}\left(B_{\sqrt{\frac{h_0}{3}}}(\textbf{0}) \right)\cap \{\y^n\geq \bar{Q}\} \subset  \partial_{\bar{c}}\bar{u}(S_{\bar{u}}(h_0))\subset
A^{-1}\left(B_{\sqrt{3h_0}}(\textbf{0}) \right)\cap \{\y^n\geq \bar{Q}\}.
\end{eqnarray*}
Moreover $$\biggl\|\bar{u}-\frac{1}{2}|A^{-1}\x|^2\biggr\|_{L^\infty\left(A(B_{\sqrt{3h_0}}(\textbf{0}))\cap\{\x^n\geq \bar{P}\}\right)}\leq \tilde{\eta}_0 h_0,$$
and $A^{-1}(\textbf{e}_n)$ is parallel to $A(\textbf{e}_n)$.
\end{lemma}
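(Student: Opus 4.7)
The plan is to extract $A$ from the Hessian of the smooth approximation $\tilde v$ (Lemma~\ref{l1}) at the origin, with a small $O(\rho_0+\tilde\delta_0)$ modification to enforce $\det A = 1$ and $A\mathbf{e}_n\parallel A^{-1}\mathbf{e}_n$, and then transfer the Taylor approximation of $\tilde v$ to $\bar u$ via the uniform closeness $\|\bar u - \tilde v\|_{L^\infty(\mathcal{C}_1\cap B_{1/3})}\le \omega(\tilde\delta_0)$, which follows from \eqref{e1} together with $\tilde v|_{\tilde{\mathcal{C}}_1^+} = v$ (Lemma~\ref{l1}).

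\textbf{Construction of $A$.} Let $M := D^2\tilde v(\mathbf{0})$. By Lemma~\ref{l1}, $M$ is symmetric with $\mathrm{Id}/K \le M \le K\,\mathrm{Id}$; since $\tilde v$ transports the equal-mass sets $\tilde{\mathcal{C}}_1$ onto $\tilde{\mathcal{C}}_2$ with unit densities, the Monge--Amp\`ere equation forces $\det M = 1$; and \eqref{m1}--\eqref{m2} give $|M_{ni}| = O(\rho_0+\tilde\delta_0)$ for $i<n$. Writing $M = \bigl(\begin{smallmatrix}M' & \mathbf{a}\\ \mathbf{a}^T & m_n\end{smallmatrix}\bigr)$ with $|\mathbf{a}| = O(\rho_0+\tilde\delta_0)$, and setting $\hat M := \bigl(\begin{smallmatrix}M' & 0\\ 0 & 1/\det M'\end{smallmatrix}\bigr)$, the block-determinant identity together with $\det M = 1$ yields $|\hat M - M| = O(\rho_0+\tilde\delta_0)$ (the correction $m_n \to 1/\det M'$ is of order $|\mathbf{a}|^2$). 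Take $A := \hat M^{-1/2}$ (positive symmetric square root): this $A$ is symmetric, $\det A = 1$, has $\mathbf{e}_n$ as eigenvector (so $A\mathbf{e}_n \parallel A^{-1}\mathbf{e}_n$), and $\|A\|+\|A^{-1}\| \le K_1$ uniformly.

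\textbf{Paraboloid approximation and inclusions.} For $\bar x \in A(B_{\sqrt{3h_0}})\cap\{\bar x^n \ge \bar P\}$, Taylor-expanding $\tilde v$ at $\mathbf{0}$ with $\tilde v(\mathbf{0})=0$ gives
\[
\bar u(\bar x) - \tfrac{1}{2}|A^{-1}\bar x|^2 = \bigl[\bar u - \tilde v\bigr](\bar x) + \nabla\tilde v(\mathbf{0})\cdot\bar x + \tfrac{1}{2}\langle (M-\hat M)\bar x,\bar x\rangle + O(h_0^{3/2}).
\]
Choosing $h_0$ small to dominate the cubic term and then $\tilde\delta_0,\rho_0$ small to dominate the first term (via $\omega(\tilde\delta_0) \ll \tilde\eta_0 h_0$) and the third term (of size $O((\rho_0+\tilde\delta_0)h_0)$), the problem reduces to showing the linear contribution satisfies $|\nabla\tilde v(\mathbf{0})|\sqrt{h_0} \le \tilde\eta_0 h_0$. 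Given this, the sandwich $A(B_{\sqrt{h_0/3}})\cap\{\bar x^n\ge\bar P\} \subset S_{\bar u}(h_0)\subset A(B_{\sqrt{3h_0}})\cap\{\bar x^n\ge\bar P\}$ follows from the identity $\{\tfrac{1}{2}|A^{-1}\bar x|^2\le h_0\}=A(B_{\sqrt{2h_0}})$, the $L^\infty$ estimate, and the flatness $\|\bar P\|_{C^2}\le\tilde\delta_0\ll\sqrt{h_0}$ (needed for a clean cut by the boundary). The corresponding image inclusions for $\partial_{\bar c}\bar u(S_{\bar u}(h_0))$ follow from the parallel argument, using that $\partial_{\bar c}\bar u$ is close to $\nabla\tilde v \approx \hat M\bar x$ on the region of interest.

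\textbf{Main obstacle.} The crux is the sharp control on $|\nabla\tilde v(\mathbf{0})|$: a naive $L^\infty$-to-$C^1$ interpolation from $\|\tilde v - \tfrac{1}{2}|\bar x|^2\|_{L^\infty}$ and the $C^3$ bound on $\tilde v$ would only yield $O(\tilde\eta_0^{1/3})$, which is too weak to beat the target $\tilde\eta_0\sqrt{h_0}$ on the scale of interest. The fix combines the symmetry identity $\tilde v_n \equiv 0$ on the reflection hyperplane (which via the $C^2$ bound and $|\mathbf{0}-\hat x_1| \le \rho_0 + 2\tilde\delta_0$ gives $|\tilde v_n(\mathbf{0})| = O(\rho_0+\tilde\delta_0)$) with the condition $\mathbf{0}\in\partial^-\bar u(\mathbf{0})$ inherited from Step~1 and $L^\infty$-stability of subdifferentials of convex functions; these together force the remaining components of $\nabla\tilde v(\mathbf{0})$ to be small enough once $\tilde\delta_0$ is taken sufficiently small relative to $\tilde\eta_0\sqrt{h_0}$. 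Once this estimate is in place, the remaining steps are a straightforward (if delicate) choice of parameters.
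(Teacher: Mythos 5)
Your overall strategy (take $A$ from $D^2\tilde v(\0)$, correct it by $O(\rho_0+\tilde\delta_0)$ so that $\mathbf{e}_n$ becomes an eigenvector, then transfer the Taylor expansion of $\tilde v$ to $\bar u$ via \eqref{e1}) is the same as the paper's, and your explicit block construction of $\hat M$ is a fine way to realize the matrix modification. However, there is a genuine gap exactly at the point you call the crux: the bound on $\nabla \tilde v(\0)$. First, the identity you invoke is wrong: by the reflection symmetry, $\partial_n\tilde v$ on the hyperplane $\{\bar x^n=-x_0^n-2\tilde\delta_0\}$ is not $0$ but the constant $\varrho(-y_0^n-2\tilde\delta_0)$, which is only known to be nonpositive and of size up to $O(\rho_0+\tilde\eta_0+\tilde\delta_0)$; only its sign is usable. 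Second, even granting your claimed bound $|\partial_n\tilde v(\0)|=O(\rho_0+\tilde\delta_0)$, it is quantitatively insufficient: the linear term must satisfy $|\nabla\tilde v(\0)|\sqrt{3h_0}\leq c\,\tilde\eta_0 h_0$, i.e.\ $|\nabla\tilde v(\0)|\lesssim\tilde\eta_0\sqrt{h_0}$, whereas $\rho_0$ cannot be taken that small — in the iteration of Theorem \ref{t1} one eventually fixes $\rho_0=10K\sqrt{h_0}$, so an $O(\rho_0)$ gradient bound produces a linear contribution of order $h_0$, not $\tilde\eta_0 h_0$ (the $O(\rho_0)$ loss is affordable only in the quadratic correction, where it suffices that $\rho_0\lesssim\tilde\eta_0$). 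The paper avoids this by a dichotomy that makes the estimate independent of $\rho_0$: if the reflection hyperplane is within distance $\sqrt{\omega(\tilde\delta_0)}$ of $\0$, use $\partial_n\tilde v\leq 0$ there together with the $C^3$ bound; otherwise use the semiconvexity of $\bar u-\tilde v+K_2|\bar x|^2$, the fact that $\0$ is a minimum point of $\bar u$, and \eqref{e1} with a step of length $\sqrt{\omega(\tilde\delta_0)}$ in the direction $-\mathbf{e}_n$ (and in directions $\mathbf{e}$ with $\mathbf{e}\cdot\mathbf{e}_n\geq 1/2$ for the remaining components). This yields \eqref{i1}, $|\nabla\tilde v(\0)|\leq K_2\sqrt{\omega(\tilde\delta_0)}$, after which one chooses $h_0$ small in terms of $\tilde\eta_0$ and then $\tilde\delta_0$ small in terms of $h_0$; your sketch, which discards the case distinction and the one-sided nature of the boundary information, does not close.

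A second, smaller gap is the pair of inclusions for $\partial_{\bar c}\bar u(S_{\bar u}(h_0))$. The lower inclusion $A^{-1}(B_{\sqrt{h_0/3}})\cap\{\bar y^n\geq\bar Q\}\subset\partial_{\bar c}\bar u(S_{\bar u}(h_0))$ is a surjectivity statement and cannot be deduced from ``$\partial_{\bar c}\bar u$ is close to $\nabla\tilde v$'': at this stage $\bar u$ is only semiconvex, not differentiable. The paper handles it by passing to the $\tilde c$-transform $\tilde u^{\tilde c}$, proving it is close to $\frac12|\tilde y|^2$ — which requires the additional estimate \eqref{e6} comparing the distances $d_1'$, $d_2'$ from $\0$ to the two (transformed) hyperplanes, again via \eqref{i1} and the fact that $\nabla(\tilde v\circ A)$ maps one hyperplane onto the other — and then running the sub-level-set argument on the dual side to get $\partial_{\tilde c^*}\tilde u^{\tilde c}\bigl(B_{\sqrt{h_0/3}}\cap A\{\bar y^n\geq\bar Q\}\bigr)\subset S_{\tilde u}(h_0)$. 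Your proposal contains no substitute for this dual argument.
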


\begin{proof}
 First we estimate the norm of $\nabla\tilde{v}(\textbf 0)$. We claim that
\begin{equation}\label{i1}
|\nabla\tilde{v}(\textbf{0})|\leq K_2 \sqrt{\omega(\tilde{\delta}_0)},
\end{equation}
where $K_2$ is a universal constant.
To prove \eqref{i1}, it is enough to show that $$-K_2\sqrt{\omega(\tilde{\delta}_0)}\leq \s_n(\textbf{0})\leq K_2\sqrt{\omega(\tilde{\delta}_0)}\quad
\text{and}\quad -\nabla \s(\0)\cdot \textbf{e}\leq K_2 \sqrt{\omega(\tilde{\delta}_0)},$$ for any unit vector $\textbf{e}$ satisfying $\textbf{e}\cdot \textbf{e}_n\geq 1/2$ .

 Since $\bar{u}$ is semiconvex and $\s$ is smooth inside $B_{1/10}$ with $\|\tilde{v}\|_ {C^3(B_{1/10})}\leq K$, there exists a universal constant
$K_2$ such that $\bar{u}-\s+K_2|\x|^2$ is a convex function inside $B_{1/10}\cap \{\x^n\geq P(\x')\}$. Then by convexity, the fact that $\textbf 0$ is a minimum point for $\bar u$, and \eqref{e1}, we get
\begin{eqnarray*}
-\s_n(\textbf{0})&\leq& \frac{(\bar{u}-\s+K_2|\x|^2)|_{(\textbf{0}+\sqrt{\omega(\tilde{\delta}_0)}\textbf{e}_n)}-(\bar{u}-\s+K_2|\x|^2)|_{\textbf{0}}}
{\sqrt{\omega(\tilde{\delta}_0)}}\\
&\leq& \frac{(K_2+2)\omega(\tilde{\delta}_0)}{\sqrt{\omega(\tilde{\delta}_0)}}=(K_2+2)\sqrt{\omega(\tilde{\delta}_0)},
\end{eqnarray*}
which implies $(-K_2-2)\sqrt{\omega(\tilde{\delta}_0)}\leq\s_n(\textbf{0}).$  By replacing $\textbf{e}_n$ with unit vector $\textbf{e}$ satisfying $\textbf{e}\cdot \textbf{e}_n\geq 1/2$ in the above computation, we also have 
\begin{equation} \label{301}
-\nabla \s(\0)\cdot \textbf{e}\leq K_2 \sqrt{\omega(\tilde{\delta}_0)}.
\end{equation}
Finally, we prove the upper bound on $\s_n(\0)$. Denote by $d_1$ the distance between $\textbf 0$ and $\{\x^n=-x_0^n-2\tilde{\delta}_0\}$.

When $d_1\geq \sqrt{\omega(\tilde{\delta}_0)}$, the proof is the same to the above proof of the lower bound on $\s_n(\0)$ simply replacing $-\textbf{e}_n$ with $\textbf{e}_n$.

When $d_1\leq\sqrt{\omega(\tilde{\delta}_0)}$ we use that $\nabla \tilde v$ maps $\{\x^n=-x_0^n-2\tilde{\delta}_0\}$ onto $\{\y^n=\rho(-y_0^n-2\tilde{\delta}_0\}$
(this follows by the symmetric of $\tilde v$, see the proof of Lemma \ref{l1}) to deduce that $\s_n(0,0,\cdots,-d_1) \leq 0$, hence
\begin{eqnarray*}
\s_n(\textbf{0})&=&\s_n(0,0,\cdots,-d_1)+\biggl(\int_0^1\s_{nn}(0,0,\cdots,-td_1)\,dt \biggr)d_1\\
&\leq& K_2d_1\leq K_2\sqrt{\omega(\tilde{\delta}_0)},
\end{eqnarray*}
concluding the proof of \eqref{i1}.

 By Lemma \ref{l1} and Taylor expansion we have
\begin{eqnarray}
\s(\x)=\nabla \s(\textbf{0})\cdot\x+\frac{1}{2}D^2\s(\0)\x\cdot\x+O(|\x|^3),
\end{eqnarray}
where we used $\s(\textbf{0})=\bar{u}(\textbf{0})=0$, hence
by \eqref{e1} and \eqref{i1} we get
$$\biggl\|\bar{u}-\frac{1}{2}D^2\s(\textbf{0})\x\cdot\x\biggr\|_{L^\infty\left(E(4h_0)\cap\{\x^n\geq \bar{P}\})\right)}
\leq \omega(\tilde{\delta}_0)+K_2 \sqrt{\omega(\tilde{\delta}_0)}\sqrt{h_0}+K_3h_0^{\frac{3}{2}},$$
where $K_2$, $K_3$ are universal constants, and $$E(h_0):=\biggl\{\x: \frac{1}{2}D^2\s(\0)\x\cdot \x\leq h_0\biggr\}.$$
 So if initially we take $\tilde{\delta}_0$, $h_0$ small,
  we have
$$\biggl\|\bar{u}-\frac{1}{2}D^2\s(\0)\x\cdot\x\biggr\|_{L^\infty\left(E(4h_0)\cap \{\x^n\geq \bar{P}\})\right)}
\leq\frac{1}{2} \tilde{\eta}_0 h_0.$$
Denote $A_1:=[D^2\s(\0)]^{-1/2}$. By \eqref{m2} and Lemma \ref{l1}  we see that the angle between $A_1^{-1}(\textbf{e}_n)$ and
$A_1(\textbf{e}_n)$ is bounded by $O(\rho_0)$. Then, it is easy to find a symmetric matrix $A$, with
$\|A-A_1\|=O(\rho_0)+O(\tilde{\delta}_0)$, such that
$A^{-1}(\textbf{e}_n)$ is
parallel to $A(\textbf{e}_n).$ In particular, by choosing $\rho_0$ sufficiently small
we obtain $$\biggl\|\bar{u}-\frac{1}{2}|A^{-1}\x|^2\biggr\|_{L^\infty\left(A(B_{\sqrt{4h_0}}(\textbf{0}))\cap \{\x^n\geq \bar{P}\}\right)}\leq \tilde{\eta}_0 h_0.$$

We now perform a normalization using $A$: Let
\[
\left\{
\begin{array}
    {l@{\quad}}
   \tilde{x}:=A^{-1}\x \\
   \tilde{y}:=A\y
\end{array}
\right.
\]
and $$\tilde{c}(\tilde{x},\tilde{y}):=\bar{c}(A\tilde{x}, A^{-1}\tilde{y}),$$
$$\tilde{u}:=\bar{u}(A\tilde{x}).$$ Note that, since $\text{Id}/K\leq A\leq K\text{Id}$, we have 
\begin{equation}\label{ee1}
\|\tilde{c}+\tilde{x}\cdot \tilde{y}\|_{C^2}\leq K\tilde{\delta}_0
\end{equation}
and
\begin{equation}\label{e5}
\biggl\|\tilde{u}-\frac{1}{2}|\tilde{x}|^2\biggr\|_{L^\infty\left(B_{\sqrt{4h_0}}(\textbf{0})\cap  A^{-1}\{\x^n\geq \bar{P}\}\right)}\leq \tilde{\eta}_0 h_0.
\end{equation}
Let us denote by $d_1'$ (resp. $d_2'$) the distance between $\0$ and the hyperplane $A^{-1}(\{\x^n=-x_0^n-2\tilde{\delta}_0\})$ (resp. $A(\{\y^n=\varrho(-y_0^n-2\tilde{\delta}_0)\}$).
Since by construction $AD^2\tilde v(\textbf 0) A =1+O(\rho_0)+O(\tilde{\delta}_0)$, it follows by 
Lemma \ref{l1} that the hessian of the function $\s(Ax)$ is equal to $(1+o(1))\text{Id}$ inside $B_{K\sqrt{h_0}}$ for some fixed large $K$,
where $o(1) \rightarrow 0$ as $\rho_0,h_0, \tilde{\delta}_0\rightarrow 0$.
Since $\nabla (\tilde v\circ A)$ maps $A^{-1}(\{\x^n=-x_0^n-2\tilde{\delta}_0\})$ onto $A(\{\y^n=\varrho(-y_0^n-2\tilde{\delta}_0\})$,
we deduce that
 $$-d_2'=\partial_n [\s(A\x)]|_\0-(1+o(1))d_1',$$  so, using  \eqref{i1} and the fact that $|x_0|\leq \rho_0 \ll \sqrt{h_0}$,
 \begin{equation}\label{e6}
 |d_1'-d_2'|\leq K_2\sqrt{\omega(\tilde{\delta}_0)}+o(1)\sqrt{h_0}.
 \end{equation}
 By \eqref{e5} and an argument similar to the proof of \eqref{i1},
 one obtains that $\tilde u$ (resp. $\nabla \tilde u$) is close to $\tilde v\circ A$ (resp. $\nabla [\tilde v\circ A])$ and,
 exactly as in the interior case (see \cite[Proof of Theorem 4.3, Step 3]{DF}), we get
 $$B_{\sqrt{\frac{1}{2}h_0}}(\textbf{0})\cap  A^{-1}\{\x^n\geq \bar{P}\}\subset S_{\tilde{u}}(h_0)\subset B_{\sqrt{3h_0}}(\textbf{0})\cap  A^{-1}\{\x^n\geq \bar{P}\},$$ and
$$\partial_{\tilde{c}} \tilde{u}(S_{\tilde{u}}(h_0))\subset B_{\sqrt{3h_0}}(\textbf{0})\cap  A\{\y^n\geq \bar{Q}\}.$$
Now let $\tilde{u}^{\tilde{c}}: B_{\sqrt{4h_0}}(\textbf{0})\cap  A\{\y^n\geq \bar{Q}\}\rightarrow \mathbb{R}$ be a $\tilde{c}^*$-convex function defined by
 $$\tilde{u}^{\tilde{c}}(\tilde{y}):=\displaystyle{\sup_{\tilde{x}\in B_{\sqrt{4h_0}}(\textbf{0})\cap
  A^{-1}\{\x^n\geq \bar{P}\}}}\{-\tilde{c}(\tilde{x},\tilde{y})-\tilde{u}(\tilde{x})\},$$
  where $\tilde c^*(x,y):=\tilde c(y,x)$. Then
by \eqref{ee1}, \eqref{e5}, \eqref{e6}, we have 
\begin{eqnarray*}
\biggl\|\tilde{u}^{\tilde{c}}-\frac{1}{2}|\tilde{y}|^2\biggr\|_{B_{\sqrt{4h_0}}(\textbf{0})\cap  A\{\y^n\geq \bar{Q}\}}&\leq& \tilde{\eta}_0h_0+K\tilde{\delta}_0+\Bigl(K_2\sqrt{\omega(\tilde{\delta}_0)}+o(1)\sqrt{h_0}\Bigr)\sqrt{h_0}\\
&\leq& 2\tilde{\eta}_0h_0,
\end{eqnarray*}
provided $\tilde{\delta}_0$, $\rho_0$, and $\sqrt{h_0}$ are small enough. Also, similarly to above,
$$\partial_{\tilde{c}^*} \tilde{u}^{\tilde{c}}\left({B_{\sqrt{\frac{1}{3}h_0}}(\textbf{0})\cap  A\{\y^n\geq \bar{Q}\}}\right)\subset B_{\sqrt{\frac{1}{2}h_0}}(\textbf{0})\cap  A^{-1}\{\x^n\geq \bar{P}\}\subset S_{\tilde{u}}(h_0).$$
Therefore ${B_{\sqrt{\frac{1}{3}h_0}}(\textbf{0})\cap  A\{\y^n\geq \bar{Q}\}}\subset \partial_{\tilde{c}}\tilde{u}(S_{\tilde{u}}(h_0))$, and translating back to the $\x, \y$ coordinates this completes the proof of Lemma \ref{l2}.
\end{proof}
\vspace{1em}

\noindent$\bullet$ \emph{Step 4: The iteration argument.}
We begin by noticing that by construction $\bar{d}_1:=\text{dist}(\0, B_{1/3}\cap\{\x^n=\bar{P}\})\leq \rho_0$.

Up to an affine transformation we can assume $D_{xy}\bar{c}(\0, \0)=\text{Id}.$ We now perform a change of variable: Let
\[
\left\{
\begin{array}
    {l@{\quad}}
   \tilde{x}:=\frac{1}{\sqrt{h_0}}A^{-1}\x \\
   \tilde{y}:=\frac{1}{\sqrt{h_0}}A\y
\end{array}
\right.
\]
and $$c_1(\tilde{x},\tilde{y}):=\frac{1}{h_0}\bar{c}(\sqrt{h_0}A\tilde{x}, \sqrt{h_0}A^{-1}\tilde{y}),$$
$$u_1:=\frac{1}{h_0}\bar{u}(\sqrt{h_0}A\tilde{x}),$$ where $A$ is from Lemma \ref{l2}.
Note that, since $A^{-1}(\textbf{e}_n)$ is parallel to $A(\textbf{e}_n)$, after the transformation, we have that $\{\x^n=\bar{P}(\x')\}$ (resp. $\{\y^n=\bar{Q}(\y')\}$) becomes $A^{-1}\{\x^n=\bar{P}(\x')\}$ (resp. $A\{\y^n=\bar{Q}(\y')\}$),
and after a rotation of coordinates it can be written as $\{\tilde{x}^n=P_1(\tilde{x}')\}$ (resp. $\{\tilde{y}^n=Q_1(\tilde{y}')\}$). Since
$(\|A\|+\|A^{-1}\|)\sqrt{h_0}\ll 1$, we can ensure that
$\|P_1\|_{C^2}+\|Q_1\|_{C^2}\leq \tilde{\delta}_0.$  


We also define $$f_1(\tilde{x}):=\bar{f}(\sqrt{h_0}A\tilde{x}),\quad g_1(\tilde{y}):=\bar{g}(\sqrt{h_0}A^{-1}\tilde{y}).$$
Since $\det(A)=1$, we have that $(T_{u_1})_\sharp f_1=g_1.$ Moreover, defining
$$\mathcal{C}_1^{(1)}:=S_{u_1}(1),\quad \mathcal{C}_2^{(1)}:=\partial_{c_1}u_1(S_{u_1}(1)),$$
it follows by Lemma \ref{l2} that
\begin{equation}
 B_{1/3}\cap\{\tilde{x}^n\geq P_1(\tilde{x}')\}\subset \mathcal{C}_1^{(1)}\subset B_{3}\cap\{\tilde{x}^n\geq P_1(\tilde{x}')\},
\end{equation}
\begin{equation}
B_{1/3}\cap\{\tilde{y}^n\geq Q_1(\tilde{x}')\}\subset \mathcal{C}_2^{(1)}\subset B_{3}\cap\{\tilde{y}^n\geq Q_1(\tilde{x}')\}.
\end{equation}
Now it is easy to check that $u_1, c_1, f_1, g_1, \mathcal{C}_1^{(1)},  \mathcal{C}_2^{(1)}$ satisfy all the conditions for Lemma \ref{l2}.
Therefore, we can apply Lemma \ref{l2} to $u_1$ and we can find a matrix $A_1$ satisfying
$\|A_1\|, \|A_1^{-1}\|\leq K_1$, $\det(A_1)=1,$
\begin{equation}
A_1\left(B_{\sqrt{\frac{h_0}{3}}}(\textbf{0}) \right)\cap \{\tilde{x}^n\geq P_1\} \subset S_{u_1}(h_0):=\{u_1\leq h_0\}\subset
A_1\left(B_{\sqrt{3h_0}}(\textbf{0})\right)\cap \{\tilde{x}^n\geq P_1\},\label{eq:shape1}
\end{equation}
\begin{equation}
A_1^{-1}\left(B_{\sqrt{\frac{h_0}{3}}}(\textbf{0}) \right)\cap \{\tilde{y}^n\geq Q_1\} \subset  \partial_{c_1}u_1(S_{u_1}(h_0))\subset
A_1^{-1}\left(B_{\sqrt{3h_0}}(\textbf{0}) \right)\cap \{\tilde{y}^n\geq Q_1\},\label{eq:shape2}
\end{equation}
$$\biggl\|u_1-\frac{1}{2}|A_1^{-1}\tilde{x}|^2\biggr\|_{L^\infty\left(A_1(B_{\sqrt{3h_0}}(\textbf{0}))\cap\{\tilde{x}^n\geq P_1\}\right)}\leq \tilde{\eta}_0 h_0,$$
and  $A_1^{-1}(\textbf{e}_n)$ is parallel to $A_1(\textbf{e}_n)$. Note that after a rotation we can just assume $A_1^{-1}(\textbf{e}_n)$ and $A_1(\textbf{e}_n)$ are in $\textbf{e}_n$ direction.

Now, we finally fix $\rho_0: =10K\sqrt{h_0}$ (where $K$ is
constant in Lemma \ref{l1}).
 As long as $\text{dist}(\0, B_{1/3}\cap\{\tilde{x}^n=P_k\})\leq \rho_0,$ , we can continue the iteration, hence we only need to consider two cases.
\smallskip

-\emph{Case 1}: At step $k+1$, $d(\0, B_{1/3}\cap\{\tilde{x}^n=P_{k+1}\})> \rho_0.$ Here we assume $k+1$ is the smallest among all such integers.
\smallskip

-\emph{Case 2}: The iteration goes on forever. Note that case 2 only happens when $\0$ is
on the boundary of $\mathcal{C}_1.$
\smallskip

For Case 1, at step $k+1$, after an affine transformation $M_k:=A_{k-1}\cdots A_1$ we have that
$$u_{k+1}:=\frac{1}{h_0^k}u_1(h_0^{k/2}M_k\tilde{x}),\qquad
c_{k+1}:=\frac{1}{h_0^k}c_1(h_0^{k/2}M_k\tilde{x}, h_0^{k/2}M'^{-1}_k\tilde{y}),
$$
$$f_{k+1}:=f_1(h_0^{k/2}M_k\tilde{x}),\qquad
g_{k+1}:=g_1(h_0^{k/2}M'^{-1}_k\tilde{y})
$$ satisfy the same conditions as $u_1, c_1, f_1, g_1$ with exactly the same constants (here and in the sequel,  $M_k'$ denotes the transpose of $M_k$).
 Since $\text{dist}(\0, B_{1/3}\cap\{\tilde{x}^n=P_{k+1}\})\geq \rho_0=10K\sqrt{h_0},$  
 by doing one more rescaling we obtain that $\text{dist}(\0, B_{1/3}\cap\{\tilde{x}^n=P_{k+2}\}) \geq 1/K'$
 for some $K'>0$ universal,
 so we have reduced ourselves to the interior problem as the one studied in \cite{DF}.
 In particular, by \cite[Theorem 4.3]{DF} we obtain
that $u$ is $C^{1,\beta}$ at $\0$.

For Case 2, with the same notation as in Case 1 we have that, for each $k\geq1$,
\begin{equation}\label{e31}
\text{Id}/K_1^k\leq M_k\leq K_1^k\text{Id},\quad \det{M_k}=1,
\end{equation}
\begin{equation}\label{e32}
M_k\left(B_{\frac{1}{3}h_0^{{k}/{2}}}\right)\cap\{\tilde{x}^n\geq P_1\}\subset S_{u_1}(h_0^k)\subset M_k\left(B_{3h_0^{{k}/{2}}}\right)\cap\{\tilde{x}^n\geq P_1\}.
\end{equation}
\begin{equation}\label{e51}
M'^{-1}_k\left(B_{\frac{1}{3}h_0^{{k}/{2}}}\right)\cap\{\tilde{y}^n\geq Q_1\}\subset \partial_{c_1}u_1\left(S_{u_1}(h_0^k)\right)\subset M'^{-1}_k\left(B_{3h_0^{{k}/{2}}}\right)\cap\{\tilde{y}^n\geq Q_1\}.
\end{equation} 
By \eqref{e31} we have
\begin{equation}
B_{\frac{1}{3}\left(\frac{\sqrt{h_0}}{K_1}\right)^{k}}\cap\{\tilde{x}^n\geq P_1\}\subset S_{u_1}(h_0^k)\subset B_{3(K_1\sqrt{h_0})^{k}}\cap\{\tilde{x}^n\geq P_1\}
\end{equation}
and
\begin{equation}\label{e52}
B_{\frac{1}{3}\left(\frac{\sqrt{h_0}}{K_1}\right)^{k}}\cap\{\tilde{x}^n\geq Q_1\}\subset \partial_{c_1}u_1\left(S_{u_1}(h_0^k)\right)\subset B_{3(K_1\sqrt{h_0})^{k}}\cap\{\tilde{y}^n\geq Q_1\},
\end{equation}
so defining $r_0:=\frac{\sqrt{h_0}}{3K_1}$ we obtain, for $\beta<1$ fixed,
$$\|u_1\|_{L^\infty\left(B_{r_0^k}\cap \{\tilde{x}^n\geq P_1\}\right)}\leq h_0^k=(3K_1r_0)^{2k}\leq r_0^{(1+\beta)k},$$ provided $h_0$ (and so $r_0$)
is sufficiently small. This implies the $C^{1,\beta}$ regularity of $u_1$ at $\0$,
which means that $u$ is $C^{1,\beta}$ at $x_0$. Since $x_0 \in \mathcal C_1\cap B_{\rho_0}$
is arbitrary, this concludes the proof of the theorem.
\qed
\vspace{1em}

\begin{remark}\label{rmk1}
Under the conditions of Theorem \ref{t1}, the following useful property holds: there exists $\rho_1\leq \rho_0$ such that
$T_u(B_{\rho_1}\cap \{x^n=P(x')\})\subset\{y^n=Q(y')\}$.  Indeed, let
$$u^c(y)=\underset{x\in \mathcal{C}_1}{\sup}\{-c(x,y)-u(x)\}.$$
By \eqref{401}, \eqref{403}, and \eqref{eq:uP}
it is easy to check that $\|u^c(y)-\frac{1}{2}|y|^2\|_{C^0(B_{1/2}\cap\{y^n>Q(y')\})}\rightarrow 0$ as $\delta_0\rightarrow 0.$ Hence, when $\delta_0$ is sufficiently small,
by restricting to a smaller domain we can still apply Theorem \ref{t1} to $u^c$ obtaining that $u^c$ is $C^{1,\beta}$ in $B_{\rho_0'}\cap \{y^n\geq Q(y')\}$ for some $\rho_0'>0$. Let $T_{u^c}$ be the optimal transport map from $\mathcal{C}_2$ to $\mathcal{C}_1$.
It is well known that $Du^c(y)=D_yc(T_{u^c}(y),y)$, and $T_{u^c}$ is the inverse
of $T_u$ in an almost everywhere sense. Since $u$ (resp. $u^c$) is $C^{1,\beta}$ in $B_{\rho_0}\cap \{x^n\geq P(x')\})$ (resp. $B_{\rho_0'}\cap \{y^n\geq Q(y')\}$) we deduce that both $T_u$ and $T_{u^c}$
are continuous near $\0$, being one the inverse of the other, $T_u$ is a homeomorphism from $B_{\rho_1}\cap \{x^n\geq P(x')\}$ to
$T_u(B_{\rho_1}\cap \{x^n\geq P(x')\})$ for any $\rho_1$ sufficiently small. From this fact it is easy to conclude that $T_u(B_{\rho_1}\cap \{x^n=P(x')\})\subset\{y^n=Q(y')\}$, as desired.
\end{remark}

\section{$C^{2,\alpha}$ regularity}
Below we still use $P$ and $Q$ to denote two $C^2$ functions defined on $\mathbb{R}^{n-1}$ such that $P(\textbf{0})=Q(\textbf{0})=0$, $\nabla P(\textbf{0})=\nabla Q(\textbf{0})=\textbf{0}$, and
\begin{equation}\label{e11}
\|P\|_{C^2}+ \|Q\|_{C^2}\leq \delta.
\end{equation}
$co[E]$ is used to denote the convex hull of a set $E$, and $\mathcal{N}_r(E)$ is used to denote the $r$-neighborhood of $E$. $S^-$ denotes the reflection of $S$ with respect to the hyperplane $\{x^n=0\},$ and $\tilde{S}:=S\cup S^-$.

\begin{lemma} (Comparison principle) \label{l3} Let $u$ be a $c$-convex function of class $C^1$ inside the set $S:=\{u<1\}$, and assume that $u(\textbf{0})=0$,
\begin{equation}\label{e12}
B_{1/K}\cap \{x^n\geq P(x')\}\subset S\subset B_K\cap\{x^n\geq P(x')\},
\end{equation}
 \begin{equation}\label{e13}
 B_{1/K}\cap \{y^n\geq Q(y')\}\subset \partial_c u(S)\subset B_{K}\cap \{y^n\geq Q(y')\},
\end{equation}
and 
\begin{equation}\label{bb}
\partial_c u\left(\partial S\cap \{x^n=P(x')\}\right)\subset B_{K}\cap \{y^n= Q(y')\}.
\end{equation}

  Let $f, g$ be two densities such that $$\|f/\lambda_1-1\|_{L^\infty(S)}+\|g/\lambda_2-1\|_{L^\infty(S)}\leq \epsilon$$ for some constants
$\lambda_1, \lambda_2\in(1/2,2)$ and $\epsilon\in(0,1/4),$ and assume that $(T_u)_\sharp f=g.$ Furthermore, suppose that
\begin{equation}\label{e15}
\|c+x\cdot y\|_{C^2(S\times\partial_cu(S) )}\leq \delta.
\end{equation}

Then there exist a universal constant $\gamma\in (0,1)$ and $\delta_1=\delta_1(K)>0$ small, such that the following holds: Let $v$ be the solution of
\begin{equation}
\label{eq:MA v}
\begin{cases}
\det(D^2v)=\lambda_1/\lambda_2 &\mbox{in $\mathcal{N}_{\delta^\gamma}(co[\tilde{S}])$},\\
v=1&\mbox{on $\partial\left(\mathcal{N}_{\delta^\gamma}(co[\tilde{S}])\right)$}
 \end{cases}
 \end{equation}
Then
\begin{equation}
\|u-v\|_{L^\infty(S)}\leq C_K(\epsilon+\delta^{\gamma/n})\ \ \text{provided}\ \delta\leq \delta_1,
\end{equation}
where $C_K$ is a constant independent of $\lambda_1, \lambda_2, \epsilon$, and $\delta$.
\end{lemma}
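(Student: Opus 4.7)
\emph{Proof plan for Lemma \ref{l3}.} The strategy is to symmetrize the problem across the hyperplane $\{x^n=0\}$, so that the boundary comparison reduces to an almost-interior stability problem for the standard Monge--Amp\`ere equation to which a Caffarelli-type $L^\infty$ stability estimate can be applied.

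\emph{Step 1: symmetrization.} Thanks to the mapping property \eqref{bb} and the smallness of $\|P\|_{C^2}$, $\|Q\|_{C^2}$, and $\|c+x\cdot y\|_{C^2}$, the map $T_u$ sends the $\delta$-flat bottom $\partial S\cap\{x^n=P(x')\}$ into the $\delta$-flat bottom $\{y^n=Q(y')\}$. Combined with \eqref{e15}, this forces $\nabla u\cdot \mathbf{e}_n=O(\delta)$ on this flat piece. Consequently, the even reflection $\hat u(x',x^n):=u(x',-x^n)$ glues to $u$ across $\{x^n=0\}$ in a $C^1$-approximate sense, producing a function $\hat u$ on $\tilde S=S\cup S^-$ whose associated transport sends the reflected density $\hat f$ (with $\|\hat f/\lambda_1-1\|_\infty\le \epsilon$) onto $\hat g$ (with $\|\hat g/\lambda_2-1\|_\infty\le \epsilon$), up to total mass and pointwise errors of order $\delta$.

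\emph{Step 2: reduction to quadratic cost.} Run the argument of Lemma \ref{l0} on the symmetrized problem: since the cost is $\delta$-close to $-x\cdot y$ and the densities are $\epsilon$-close to the constants $\lambda_1$, $\lambda_2$, the potential $\hat u$ is $L^\infty$-close (with modulus $\omega(\epsilon)+O(\delta^{\gamma/n})$, for a suitable $\gamma$ to be fixed at the end) to the convex Brenier potential $\tilde v$ of the quadratic-cost OT problem from $\lambda_1\mathbf{1}_{\tilde S}$ onto $\lambda_2\mathbf{1}_{\widetilde{\partial_c u(S)}}$. By uniqueness of optimal transport and the symmetry of the data (as in Lemma \ref{l1}), $\tilde v$ is symmetric across $\{x^n=0\}$, equals $1$ on the upper/lower outer parts of $\partial\tilde S$ (inherited from $u=1$ on $\partial S$), and solves $\det D^2\tilde v=\lambda_1/\lambda_2$ in the interior of $\tilde S$.

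\emph{Step 3: Monge--Amp\`ere stability.} Compare $\tilde v$ with $v$. Both are convex solutions of $\det D^2 w=\lambda_1/\lambda_2$ with boundary value $\approx 1$, on the nearby domains $\tilde S$ and $\mathcal N_{\delta^\gamma}(\mathrm{co}[\tilde S])$ respectively. Since $\|P\|_{C^2}+\|Q\|_{C^2}\le \delta$, the Hausdorff distance between $\tilde S$ and $\mathrm{co}[\tilde S]$ is $O(\delta)$, so for $\gamma\in(0,1)$ chosen small enough the $\delta^\gamma$-neighborhood comfortably absorbs this discrepancy together with the boundary-value error coming from Step~1. Caffarelli's $L^\infty$/ABP stability estimate for Monge--Amp\`ere on nearby convex sets then yields $\|\tilde v-v\|_{L^\infty(S)}\le C_K\delta^{\gamma/n}$, and chaining with Step~2 produces the announced bound $\|u-v\|_{L^\infty(S)}\le C_K(\epsilon+\delta^{\gamma/n})$.

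\emph{Main obstacle.} The delicate point is making Step~1 quantitative: the even reflection across $\{x^n=0\}$ is only an \emph{exact} reflection of the transport problem when $P\equiv Q\equiv 0$ and $c=-x\cdot y$, so the errors due to the curvature of the lower boundaries and to the nonquadratic part of $c$ must all be tracked and shown to enter only at order $\epsilon+\delta^\gamma$. The hypothesis \eqref{bb} is precisely what pins $T_u$ to a Neumann-type condition on the curved bottom and makes the reflected densities compatible; without it, the symmetrization would fail at leading order. The specific exponent $\gamma$ is then dictated by the power of $\delta$ that the ABP estimate can tolerate in the boundary fluctuation while still closing the comparison.
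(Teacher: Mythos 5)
Your plan diverges from the paper's argument, and it has a genuine quantitative gap. The lemma asserts the explicit rate $\|u-v\|_{L^\infty(S)}\leq C_K(\epsilon+\delta^{\gamma/n})$, with \emph{linear} dependence on $\epsilon$, and this rate is exactly what is consumed later in the iteration of Theorem \ref{t2} (there $\epsilon\sim \operatorname{osc} f+\operatorname{osc} g\sim h_k^{\alpha\beta/2}$, and the errors must decay geometrically to be summable). Your Step 2 rests on ``running the argument of Lemma \ref{l0}'', which is a compactness argument and only produces an unspecified modulus $\omega(\epsilon)$; chaining it in Step 3 cannot upgrade $\omega(\epsilon)$ to $C_K\epsilon$, so the announced bound does not follow. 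In effect, obtaining a quantitative stability estimate for the symmetrized transport problem with nearly constant densities is the same difficulty as the lemma itself, so the reduction is circular at the key point. There are further unresolved issues in the same direction: the even reflection of $u$ across $\{x^n=0\}$ (rather than across the curved bottom $\{x^n=P(x')\}$) produces mass and compatibility errors in a slab of thickness $O(\delta)$ which again need quantitative, not compactness, control; and in Step 3 the object $\tilde v$ is a Brenier potential (second boundary value problem), while $v$ solves a Dirichlet problem on $\mathcal N_{\delta^\gamma}(co[\tilde S])$, so ``both have boundary value $\approx 1$'' is itself only known up to the non-quantitative modulus inherited from Step 2.

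The paper avoids all of this by never comparing $u$ to an optimal transport potential at this stage: it works directly with the Dirichlet solution $v$ of \eqref{eq:MA v} and runs a barrier/maximum-principle argument with mixed boundary data. The observation you make about \eqref{bb} is used, but differently: \eqref{bb}, \eqref{e11}, and \eqref{e15} give $|\partial_{x^n}u|\leq C\delta$ on the bottom boundary, while the symmetry of $\mathcal N_{\delta^\gamma}(co[\tilde S])$ plus the Pogorelov-type bounds $\delta^{\gamma/\tau}{\rm Id}/C\leq D^2v\leq C\delta^{-(n-1)\gamma/\tau}{\rm Id}$ give $0\leq \partial_{x^n}v\leq C\delta^{\gamma/n}$ there (this is where $\gamma$ is fixed). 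One then sets $v^{\pm}:=(1\pm(4\epsilon+O(\sqrt\delta)))v\mp(4\epsilon+O(\sqrt\delta))\pm 4C\delta^{\gamma/n}(x^n\mp K)$, checks the ordering on the Dirichlet part $\partial S\cap\{x^n>P\}$ (using that $v$ is within $C\delta^{\gamma/n}$ of $1$ there) and the strict ordering of the normal derivatives on the Neumann part, and concludes by the maximum principle applied to \eqref{eq:MA T} and \eqref{eq:MA v}: a touching point on the bottom is excluded by the Neumann inequality, and an interior touching point is excluded because the multiplicative factors $1\pm 4\epsilon$ tilt the determinants beyond the $\epsilon$-oscillation of $f/\lambda_1$ and $g/\lambda_2$. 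This is precisely how the linear $\epsilon$-dependence is produced without any compactness step; if you want to salvage your symmetrization route, you would have to replace Step 2 by an estimate of this barrier type anyway.
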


Note that in our application of the comparison principle in the proof of Theorem \ref{t2}, condition \eqref{bb} follows from Remark \ref{rmk1}.
\vspace{1em}

\emph{Proof.} First of all we notice that, for any $\gamma\in(0,1)$, we have that
\begin{equation}\label{e61}
{\rm dist}\left(x, \partial\bigl(\mathcal{N}_{\delta^\gamma}(co[\tilde{S}])\bigr)\right)\leq C_K'\delta^{\gamma},\ \text{for any}\ x\in \partial S\cap \{x^n> P(x')\}.
\end{equation}
Indeed, this follows by the very same argument as the one at the beginning of the proof of
\cite[Proposition 5.2]{DF}, where the same estimate is proved in a similar situation.

Now, by standard interior estimates for solution of the Monge-Amp\`ere equation with constant right hand side, we also have
\begin{equation}\label{e16}
\text{osc}_Sv\leq C_K'',
\end{equation}
\begin{equation}\label{e17}
1-C_K''\delta^{\gamma/n}\leq v\leq 1\quad \text{on}\ \partial S\cap \{x^n>P(x')\},
\end{equation}
\begin{equation}\label{e18}
C_K''\delta^{-\frac{(n-1)\gamma}{\tau}}\geq D^2v\geq \delta^{\gamma/\tau}\text{Id}/C_K''\quad \text{in}\ co[\tilde{S}],
\end{equation}
for some $\tau>0$ universal and some constant $C_K''$ depending only on $K$.
For any point $x$ on $\partial S\cap \{x^n= P(x')\}$, by \eqref{e11},
\eqref{bb}, and \eqref{e15} we have
\begin{equation}\label{e21}
\biggl|\frac{\partial{u}}{\partial{x^n}}\biggr|\leq C_K''\delta,
\end{equation}

Since $\mathcal{N}_{\delta^\gamma}(co[\tilde{S}])$ is symmetric with respect to $\{x^n=0\},$ we see that $v$ satisfies $v(x',x^n)=v(x',-x^n)$, which implies
$\frac{\partial v}{\partial x^n}=0$ on $co[\tilde{S}]\cap\{x^n=0\}$.
Now, for $x=(x', x^n)\in \partial S\cap \{x^n= P(x')\}$, we take the point $z:=(x', 0)\in co[\tilde{S}]\cap\{x^n=0\}$, by \eqref{e18} we have that
\begin{equation}\label{e22}
0\leq \frac{\partial v}{\partial x^n}(x)\leq \frac{\partial v}{\partial x^n}(z)+C_K''\delta^{-\frac{(n-1)\gamma}{\tau}}\delta\leq C_K''\delta^{\gamma/n},
\end{equation}
where we choose $\gamma\leq \frac{\tau}{4}$ small so that $\gamma/n\leq 1-\frac{(n-1)\gamma}{\tau}.$

Let us define
$$v^+:=(1+4\epsilon+2\sqrt{\delta})v-4\epsilon-2\sqrt{\delta}+4C_K''\delta^{\gamma/n}(x^n-K),$$ and
$$v^-:=\Bigl(1-4\epsilon-\frac{\sqrt{\delta}}{2}\Bigr)v+4\epsilon+\frac{\sqrt{\delta}}{2}+4C_K''\delta^{\gamma/n}(-x^n+K+1).$$
First, it is easy to check that $v^-\geq u\geq v^+$ on $\partial S\cap\{x^n>P(x')\}.$
Also, by \eqref{e21} and \eqref{e22} we have
\begin{equation}\label{e23}
\frac{\partial v^+}{\partial x^n}> \frac{\partial u}{\partial x^n}>\frac{\partial v^-}{\partial x^n}
\end{equation}
on $\partial S\cap\{x^n=P(x')\}.$
To prove the lemma, we need only to show that $v^-\geq u\geq v^+$ on $S$. In fact, if $u\geq v^+$ fails, then $\max (v^+-u)>0$ is achieved at some point
$z\in S$. If $z\in \partial S\cap\{x^n=P(x')\}$, then we move the graph of $v^+$ down and then lift it up, it will touch the graph of $u$ at point
$(z,u(z))$ from below, which is contradict to \eqref{e23}. Therefore, $z$ must be an interior point of $S$. Hence, we can find a number $\eta>0$ such that
$\{v^+-\eta-u\geq 0\}\subset\subset S$, and
using a maximum principle argument for the equations \eqref{eq:MA T} and \eqref{eq:MA v}  we can reach a contradiction as same as the proof of \cite[Proposition 5.2]{DF}. The other part is similar.
\qed

\vspace{1em}

\noindent  \emph{Proof of Theorem \ref{t2}.}
Fixed a point $x_0\in B_{\rho_1/2}\cap \{x^n=P(x')\}$, take $y_0:=\cexp_{x_0}(\nabla u(x_0))\in \partial_cu(x_0)$ (notice that $u$ is $C^1$ at the boundary by Theorem \ref{t1}).
Up to a change of variable as in the proof of Theorem \ref{t1}, we can assume that $x_0=y_0=\0$, $u\geq 0$, $u(\0)=0,$ and $D_{xy}c(\0,\0)=\text{Id}.$

We set $S_h:=S_u(h)=\{u\leq h\}$.
\vspace{1em}

\noindent$\bullet$ \emph{Step 1: $C^{1,1}$ estimate at $\0$.}  We show that
\begin{equation}\label{e44}
B_{\sqrt{h}/K}\cap \{x^n\geq P(x')\} \subset S_h \subset B_{K\sqrt{h}}\cap  \{x^n\geq P(x')\}\ \ \ \forall\,h\leq h_1
\end{equation}
for some $h_1$ and $K$ universal.

To prove this fact we begin by observing that, 
by \eqref{e43}, for any $h_1>0$ we can choose $\eta_1=\eta_1(h_1)>0$ small enough such that \eqref{e44} holds for $S_{h_1}$ with $K=2$.
Hence, assuming without loss of generality that $\delta_1\leq 1$, we have that
$$B_{\frac{\sqrt{h_1}}{3}}\subset \mathcal{N}_{\delta_1^\gamma\sqrt{h_1}}(co[\tilde{S}_{h_1}])\subset B_{3\sqrt{h_1}},$$
where
$\gamma$ is the exponent from Lemma \ref{l3}, and $\tilde{S}_{h_1}$ is constructed in the same way as $\tilde{S}$ in Lemma \ref{l3}.


  Let $v_1$ solve the following Monge-Amp\`ere equation

\begin{equation}
\begin{cases}
\det(D^2v)=f(\0)/g(\0) &\mbox{in $\mathcal{N}_{\delta_1^\gamma\sqrt{h_1}}(co[\tilde{S}_{h_1}])$},\\
v=1&\mbox{on $\partial\left(\mathcal{N}_{\delta_1^\gamma\sqrt{h_1}}(co[\tilde{S}_{h_1}])\right)$}.
 \end{cases}
 \end{equation}
Since $B_{1/3}\subset  \mathcal{N}_{\delta_1^\gamma\sqrt{h_1}}(co[\tilde{S}_{h_1}])/\sqrt{h_1}\subset B_3$, by standard Pogorelov estimates we have that
$|D^2v_1(\0)|\leq M$, where $M>0$ is a large universal constant.

Now we recall a useful fact for the standard Monge-Amp\`ere equation. Let $w$ be a solution of
\begin{equation}
\label{eq:MA}
\begin{cases}
\det(D^2w)=f(\0)/g(\0) &\mbox{in }Z,\\
w=1&\mbox{on $\partial Z$},
 \end{cases}
 \end{equation}
where $\0\in Z$ is a convex set, $ -1<\inf w\leq w(\0)<1/2$,
and $|D^2w(\0)|\leq M+1$.
Then there exists a large universal $\bar{K}$ such that
$B_{1/\bar{K}}\subset Z\subset B_{\bar{K}}$. For reader's convenience, we give the proof below.

\begin{proof}
 By John's lemma we can find a matrix $A$ with $A(\0)=\0$ and $\det(A)=1$ such that $B_{1/C_1}(z)\subset A(Z)\subset B_{C_1}(z)$, where $C_1$ is a universal constant.
Then it is easy to check that $\bar{w}(x)=w(A^{-1}x)$ solves
\begin{equation}
\label{eq:MA2}
\begin{cases}
\det(D^2\bar{w})=f(\0)/g(\0) &\mbox{in }A(Z),\\
\bar{w}=1&\mbox{on $\partial (A(Z))$}.
 \end{cases}
 \end{equation}
Since $ -1<\inf \bar{w}\leq \bar{w}(\0)<1/2 $, it follows that ${\rm dist}(\0, \partial A(Z))> c$,  where $c>0$ is a universal constant, hence
$B_c\subset A(Z)\subset B_{C_1}$.
By interior estimates for the standard Monge-Amp\`ere equation, we have that $|D^2\bar{w}(\0)|\leq C_2$ for some universal constant $C_2$.
Since $D^2\bar{w}(\0)=A^{-1}D^2w(\0)A^{-1}$, $|D^2\bar{w}(\0)|\leq C_2$ and $|D^2w(\0)|\leq M+1$, it follows 
from \eqref{eq:MA} and \eqref{eq:MA2}
 that $\|A\|, \|A^{-1}\|\leq C_3$ for some universal
constant $C_3$ (recall that by assumption $f(\0)/g(\0)$ is bounded away from zero and infinity).
Since $A^{-1}(B_c)\subset Z\subset A^{-1}(B_{C_1})$ and $A(\0)=\0$, it follows that
there exists a universal constant $\bar{K}$ such that
$B_{1/\bar{K}}\subset Z\subset B_{\bar{K}}$, as desired.
\end{proof}

Now, we prove by induction that \eqref{e44} holds with $K:=2\bar{K}$. Let $h_k:=h_12^{-k}.$ If $h=h_1$ then we already know that \eqref{e44} holds with $K=2.$
Assume that \eqref{e44} holds with $h=h_k$ and $K=2\bar{K}$, and we will show that it holds with $h=h_{k+1}.$ For this, for any $k\in \mathbb{N}$ let $v_k$
be the solution of

\begin{equation}
\begin{cases}
\det(D^2v_k)=f(\0)/g(\0) &\mbox{in $\mathcal{N}_{\delta^\gamma\sqrt{h_k}}(co[\tilde{S}_{h_k}])$},\\
v_k=1&\mbox{on $\partial\left(\mathcal{N}_{\delta^\gamma\sqrt{h_k}}(co[\tilde{S}_{h_1}])\right)$},
 \end{cases}
 \end{equation}
 and $\delta_k:=\|c(x,y)+x\cdot y\|_{C^2\left(S_{h_k}\times T_u(S_{h_k})\right)}+ \|P_k\|_{C^{2}}+\|Q_k\|_{C^{2}},$ where $P_k=\frac{1}{\sqrt{h_k}}P(\sqrt{h_k}x')$ and $Q_k=\frac{1}{\sqrt{h_k}}Q(\sqrt{h_k}y').$ Then it is easy to see that
 $\|P_k\|_{C^{2}}+\|Q_k\|_{C^{2}}\leq C\sqrt{h_k}.$
By the $C^{1,\beta}$ regularity of $u$ (which implies that $T_u$ is $C^{0,\beta}$) we also have
 $$\|c(x,y)+x\cdot y\|_{C^2\left(S_{h_k}\times T_u(S_{h_k})\right)}\leq C\Bigl(\text{diam}(S_{h_k})+\text{diam}(T_u(S_{h_k}))\Bigr)\leq Ch_k^{\beta/2},$$
 which implies in particular that
\begin{equation}
\label{eq:delta h}
\delta_k\leq Ch_k^{\beta/2}.
\end{equation}
 Let us consider the rescaled functions
 $$\bar{u}_k(x):=u(\sqrt{h_k}x)/h_k,\quad \bar{v}_k(x):=v_k(\sqrt{h_k}x)/h_k$$
 and notice that,
by the inductive hypothesis,
\begin{equation}\label{e71}
B_{\frac{1}{2\bar{K}}}\cap\{x^n\geq P_k(x')\} \subset \bar{S}_k:=\{\bar{u}_k\leq 1\}\subset B_{2\bar{K}}\cap\{x^n\geq P_k(x')\}.
\end{equation}

Note that by \eqref{e32}, \eqref{e51} we have that there exists an affine transformation $L_k$ 
such that both $L_k(\bar{S}_k)$ and  $L_k'^{-1}\left(\partial_cu(\bar{S}_k)\right)$ are universally comparable to half-balls. 
By \eqref{e71}, we see that $\bar{S}_k$ is already comparable to a half-ball, hence $L_k$ satisfies $\|L_k\|, \|L_k^{-1}\|\leq K'$ for
some universal constant $K'$, which implies that $\partial_cu(\bar{S}_k)=L'_k\bigl(L_k'^{-1}\left(\partial_cu(\bar{S}_k)\right) \bigr)$ is also universally comparable to a
half-ball,
that is, there exists a universal constant $\bar K' \geq \bar K$ such that
$$
B_{\frac{1}{2\bar{K}'}}\cap\{y^n\geq Q_k(y')\} \subset \partial_cu(\bar{S}_k)\subset B_{2\bar{K}'}\cap\{y^n\geq Q_k(y')\}.
$$
This estimate and \eqref{e71} allow us to 
apply Lemma \ref{l3} and deduce that
\begin{equation}\label{e73}
\|\bar{u}_k-\bar{v}_k\|_{L^\infty(\bar{S}_k)}\leq C_{\bar{K}'}\biggl(\underset {S_{h_k}}{\text{osc}}f+\underset {T_u(S_{h_k})}{\text{osc}}g+\delta_k^{\gamma/n}\biggr)
\leq C_{\bar{K}'}h_k^{\frac{\alpha\beta\gamma}{2n}}.
\end{equation}
By \eqref{e71} we see that when $h_1$ is small $\{\bar{v}_k\leq 1\}$ has ``good shape'', namely
\begin{equation}
\label{eq:Sv}
B_{\frac{1}{3\bar{K}}\sqrt{h_k}}\subset \mathcal{N}_{\delta_k^\gamma\sqrt{h_k}}(co[\tilde{S}_{h_k}])\subset B_{3\bar{K}\sqrt{h_k}}.
\end{equation}
Now we show that
 \begin{equation}\label{e81}
 2C_{\bar{K}'}h_k^{\frac{\alpha\beta\gamma}{2n}}>\displaystyle{\inf_{\mathcal{N}_{\delta^\gamma\sqrt{h_k}}(co[\tilde{S}_{h_k}])}}\bar{v}_k\geq -2C_{\bar{K}'}h_k^{\frac{\alpha\beta\gamma}{2n}},
 \end{equation}
 provided $h_1$ is small enough.
By convexity and symmetry, we know that $\displaystyle{\inf_{\mathcal{N}_{\delta^\gamma\sqrt{h_k}}(co[\tilde{S}_{h_k}])}}\bar{v}_k$ is achieved at $\{x^n=0\}$. Let us denote
$$
\hat \pi:\mathbb{R}^n\rightarrow \mathbb{R}^{n-1}:=\{x^n=0\}
$$
as the standard projection, and for any
$(z',0)\in \hat \pi\left(\{\bar{u}_k=1\}\cap\{x^n=P_k(x')\}\right)$ we denote the corresponding point on $\{\bar{u}_k=1\}\cap\{x^n=P_k(x')\}$ as $z:=(z', z^n).$
By \eqref{e22}, \eqref{eq:delta h}, and \eqref{e73}, we have that 
$$\bar{v}_k(z',0)\geq \bar{u}_k(z)- C_{\bar{K}'}h_k^{\frac{\alpha\beta\gamma}{2n}}-C''_{\bar{K}}\delta_k^{\gamma/n}\sqrt{h_k}\geq 1/2,$$ provided $h_1$ small. Also, by \eqref{e73}
we also have $\bar{v}_k(\0)\leq  C_{\bar{K}'}h_k^{\frac{\alpha\beta\gamma}{2n}}$ (recall that $u(\0)=0$, so $\displaystyle{\inf_{\mathcal{N}_{\delta^\gamma\sqrt{h_k}}(co[\tilde{S}_{h_k}])}}\bar{v}_k$ is achieved inside  $\hat \pi\left(\{\bar{S}_k\cap\{x^n=P_k(x')\}\right)$. Again by \eqref{e22}, \eqref{eq:delta h}, and \eqref{e73}, we have 
$$\bar{v}_k(z',0)\geq \bar{u}_k(z)-C_{\bar{K}'}h_k^{\frac{\alpha\beta\gamma}{2n}}-C''\delta_k^{\gamma/n}\sqrt{h_k}\geq -
2C_{\bar{K}'}h_k^{\frac{\alpha\beta\gamma}{2n}}$$ for any $(z',0)\in \hat \pi\left(\{\bar{S}_k\cap\{x^n=P_k(x')\}\right).$ Hence \eqref{e81} follows from the above discussion easily.
Note that \eqref{e81} implies that $\0$ is almost the minimum point of $\bar{v}_k$.

Now, as in the proof of \cite[Theorem 5.3]{DF}, by standard estimates on the sections of solutions to the Monge-Amp\`ere equation it follows that
the shapes of $\{\bar{v}_k\leq 1\}$ and $\{\bar{v}_k\leq 1/2\}$ are
comparable, and in addition sections are well included into each other: hence, thanks to \eqref{eq:Sv} there exists a universal constant $L>1$ such that
$$B_{1/L\bar{K}}\subset \{\bar{v}_k\leq 1/2\}\subset B_{L\bar{K}},\quad \text{dist}\left(\{\bar{v}_k\leq 1/4\},\partial\{\bar{v}_k\leq 1/2\}\right)\geq 1/(L\bar{K}).$$
Using again \eqref{e73} we have that, if $h_1$ is sufficiently small,
$$B_{1/(2L\bar{K})}\cap \{x^n\geq P_k(x')\} \subset\{\bar{u}_k\leq 1/2\}\subset B_{2L\bar{K}}\cap \{x^n\geq P_k(x')\},$$
\begin{equation}\label{105}
\text{dist}\left(\{\bar{u}_k\leq 1/4\},\{\bar{u}_k=1/2\}\right)\geq 1/(2L\bar{K}),
\end{equation}

This allows us to apply Lemma \ref{l3} to $\bar{u}_{k+1}$ to get
\begin{equation}\label{101}
\|\bar{u}_{k+1}-\bar{v}_{k+1}\|_{L^\infty(\bar{S}_{k+1})}\leq C_{2L\bar{K}}\biggl(\underset {S_{h_{k+1}}}{\text{osc}}f+\underset {T_u(S_{h_{k+1}})}{\text{osc}}g+\delta_k^{\gamma/n}\biggr)
\leq C_{2L\bar{K}}h_k^{\frac{\alpha\beta\gamma}{2n}}.
\end{equation}
Hence, combining \eqref{e73} and \eqref{101},
\begin{eqnarray}\label{ee2}
\|v_k-v_{k+1}\|_{L^\infty(S_{h_{k+1}})}&\leq& \|v_k-u\|_{L^\infty(S_{h_k})}+\|u-v_{k+1}\|_{L^\infty(S_{h_{k+1}})}\nonumber\\
&=&h_k\|\bar{u}_k-\bar{v}_k\|_{L^\infty(\bar{S}_k)}+h_{k+1}\|\bar{u}_{k+1}-\bar{v}_{k+1}\|_{L^\infty(\bar{S}_{k+1})}\nonumber\\
&\leq& C(C_{\bar{K}'}+C_{2L\bar{K}})h_k^{1+\sigma}, 
\end{eqnarray}
where $\sigma=\frac{\alpha\beta\gamma}{2n}.$

Now, we denote $$S_{h_{k+1}}^+:=S_{h_{k+1}}\cup\Bigl\{(x',x^n): |x^n|\leq Ch_k, \,x'\in \hat \pi\left(\{S_{h_{k+1}}\cap\{x^n=P(x')\}\right)\Bigr\},$$
and we also denote $S_{h_{k+1}}^-$ as the reflection of $S_{h_{k+1}}^+$ with respect to $\{x^n=0\}.$
By scaling back \eqref{105}, we have that $\text{dist}\left(S_{h_{k+2}}, \partial (S_{h_{k+1}}^+\cup S_{h_{k+1}}^-)\right)\geq \sqrt{h_k}/(4L\bar{K}).$
Then by \eqref{e22} and \eqref{ee2} we have $\|v_k-v_{k+1}\|_{L^\infty\left(S_{h_{k+1}}^+\cup S_{h_{k+1}}^-\right)}\leq C_{\bar K',2L\bar K}h_k^{1+\sigma}$, with $C_{\bar K',2L\bar K}$ is a constant depending only on $\bar{K}'$ and $2L\bar K$, provided $\delta_1$ is small.
Hence, we can apply the classical Pogorelov and Schauder estimates to get
\begin{equation}\label{201}
\|D^2v_k-D^2v_{k+1}\|_{L^\infty(S_{h_{k+2}})}\leq C_{\bar K',2L\bar K}'h_k^\sigma,
\end{equation}
\begin{equation}\label{202}
\|D^3v_k-D^3v_{k+1}\|_{L^\infty(S_{h_{k+2}})}\leq C_{\bar K',2L\bar K}'h_k^{\sigma-1/2},
\end{equation}
where $C_{\bar K',2L\bar K}'$ is a constant depending only on $\bar{K}'$ and $2L\bar K$.
Since by the inductive assumption \eqref{e44} holds with $K=2\bar{K}$ for $h=h_j$ with $j=1,\cdots, k,$ we can apply \eqref{201} to $v_j$ to get
\begin{eqnarray}
|D^2v_{k+1}|(\0)&\leq& D^2v_1(\0)+\displaystyle{\sum_{j=1}^k}|D^2v_j(\0)-D^2v_{j+1}(\0)|\\
&\leq& M+C_{\bar K',2L\bar K}'h_1^\sigma\displaystyle{\sum_{j=0}^k}2^{-j\sigma}\\
&\leq& M+\frac{C_{\bar K',2L\bar K}'}{1-2^{-\sigma}}h_1^{\sigma}\leq M+1,
\end{eqnarray}
provided we choose $h_1$ small enough. By the definition of $\bar{K}$ it follows that also $S_{h_{k+1}}$ satisfies \eqref{e44}, concluding the proof of inductive step.
\vspace{1em}

\noindent$\bullet$ \emph{Step 2: $C^{2,\sigma'}$ estimate at $\0$.} We now prove that $u$ is $C^{2,\sigma'}$ at the origin with $\sigma':=2\sigma$, that is, there exists a sequence of paraboloids $p_k$ such that
\begin{equation}\label{203}
\underset{B_{r_{0}^k/C}}{\text{sup}}|u-p_k|\leq Cr_0^{k(2+\sigma')}
\end{equation}
for some $r_0, C>0.$

Let $v_k$ be as in the previous step, and let $p_k$ be their second order Taylor expansion at $\0$:
$$p_k(x):=v_k(\0)+\nabla v_k(\0)\cdot x+\frac{1}{2}D^2v_k(\0)x\cdot x.$$
By \eqref{e44} we have
\begin{equation}\label{204}
\|v_k-p_k\|_{L^\infty\left(B(\sqrt{h_{k+2}}/K)\right)}\leq \|v_k-p_k\|_{L^\infty(S_{h_{k+2}})}\leq C\|D^3v_k\|_{L^\infty(S_{h_{k+2}})}h_k^{3/2}.
\end{equation}
In addition, applying \eqref{202} to $v_j$ with $j=1,\cdots, k$  and recalling that $h_k=h_12^{-k}$ and $2\sigma<1$, we get
\begin{eqnarray}\label{205}
\|D^{3}v_k\|_{L^\infty(S_{h_{k+2}})}&\leq &\|D^{3}v_1\|_{L^\infty(S_{h_{3}})}+\displaystyle{\sum_{j=1}^k}\|D^{3}v_j-D^{3}v_{j+1}\|_{L^\infty(S_{h_{j+2}})}\\
&\leq& C\biggl(1+\displaystyle{\sum_{j=1}^k}h_j^{\sigma-1/2}\biggr)\leq Ch_k^{\sigma-1/2}.
\end{eqnarray}
Hence, combining \eqref{e44}, \eqref{e73}, \eqref{204}, and \eqref{205}, we get
$$\|u-p_k\|_{L^\infty\left(B(\sqrt{h_{k+2}}/K)\right)}\leq \|v_k-p_k\|_{L^\infty(S_{h_{k+2}})}+\|v_k-u\|_{L^\infty(S_{h_{k+2}})}\leq Ch_k^{1+\sigma},$$
so \eqref{203} follows with $r_0=1/\sqrt{2}$ and $\sigma'=2\sigma.$

\vspace{1em}

\noindent$\bullet$ \emph{Step 3: $C^{2,\alpha'}$ regularity near the boundary.} 
Since (recall the beginning of the proof of the theorem) the point $\0$ represented an
an arbitrary point in $B_{\rho_1/2}\cap  \{x^n=P(x')\}$, By Step 2 we know that \eqref{203}
holds at any point on $B_{\rho_1/2}\cap  \{x^n=P(x')\}$.

Set $\alpha':=\sigma'/2$.
Given $\rho>0$ let $\Omega_\rho:=\{x\in \mathcal C_1\cap B_{\rho_1/4} : d(x,\partial\Omega)\leq \rho\}.$ We want to show that if $\rho \ll \rho_1$ is sufficiently small, then $u \in C^{2,\alpha}_{\rm loc}(\Omega_\rho)$ and  $\|u\|_{C^{2,\alpha'}(\Omega_\rho)} \leq C$.
To prove this, given $x_1\in\Omega_\rho$ denote $d:={\rm dist}(x_1,\partial\mathcal C_1)$ and assume with no loss of generality that $d={\rm dist}(x_1, \0)$. Since $u$ is pointwise $C^{2,\sigma'}$ at
$\textbf{0}$ (see Step 2), after an affine transformation and change of variables similar to \eqref{e01} and \eqref{e02} we can assume that
$\|u-\frac{1}{2}|x|^2\|_{L^\infty(B_{4d})}\leq C d^{2+\sigma'}.$ Then, we perform the blow up
\[
\left\{
\begin{array}
    {l@{\quad}}
   \tilde{x}:=\frac{1}{4d}x \\
   \tilde{y}:=\frac{1}{4d}y,
\end{array}
\right.
\]
$c_1(\tilde{x},\tilde{y}):=\frac{1}{16d^2}c(4d\tilde{x}, 4d\tilde{y}),$
and $u_1(\tilde{x}):=\frac{1}{16d^2}u(4d\tilde{x}).$ In the new coordinates $B_{1/6}(\tilde{x}_1)$ lies in the interior of the domain $\widetilde{\Omega}$,
where $\tilde{x}_1:=\frac{1}{4d}x_1 $ and $\widetilde{\Omega}:=\Omega_{2\rho}/4d.$ It is immediate to check that $\|u_1(\tilde{x})-\frac{1}{2}|\tilde{x}|^2\|_{B_{1/6}(\tilde{x}_1)}\leq C d^{\alpha'}$, and $\|c(\tilde{x},\tilde{y})+\tilde{x}\cdot \tilde{y}\|_{B_{1/6}(\tilde{x}_1)\times
B_{1/6}(\tilde{y}_1)}\rightarrow 0$ as $d\rightarrow 0$, where $\tilde{y}_1\in\partial_{c_1}u_1(\tilde{x}_1).$ Hence, if $\rho$ (and therefore $d$) is sufficiently small we can apply \cite[Theorem 5.3]{DF} to deduce that
$\tilde u \in C^{2,\alpha}(B_{1/7}(\tilde x_1))$, with a universal bound.      
In particular $|D^2\tilde u(\tilde x_1)|\leq C$, thus $|D^2u(x_1)|\leq C$.
This proves that $u$ is uniformly $C^{1,1}$ inside $\Omega_\rho$,
which implies that \eqref{eq:MA T} becomes uniformly elliptic there.
Writing $\hat u(\tilde x):=\frac{1}{16d^{2+\alpha}}\left[u(x) - \frac12|x|^2\right]$,
it is easy to check that $\hat u$ solves a uniformly elliptic equation of the form
$$
G\bigl(\tilde x,\nabla \hat u(\tilde x),D^2\hat u(\tilde x)\bigr)=0
$$
where $G(x,0,0)=0$ and $\|G(\tilde x,\cdot,\cdot)\|_{C^{0,\alpha}(B_{1/6}(\tilde x_0))} \leq C$.
Hence, by standard elliptic regularity for fully-nonlinear elliptic equations we deduce that $
\|\hat u \|_{C^{2,\alpha'}(B_{1/7}(\tilde x_1))} \leq \|\hat u \|_{C^{2,\alpha}(B_{1/7}(\tilde x_1))}\leq C$.
Going back to the original coordinates, we deduce that
$$
\|u \|_{C^{2,\alpha}(B_{d/7}(x_1))} \leq Cd^{\alpha'-\alpha},\qquad
\|u \|_{C^{2,\alpha'}(B_{d/7}(x_1))} \leq C.
$$
Because of the arbitrariness of $x_1$, the first estimate proves that $u$ is of class $C^{2,\alpha}$
in the interior of $\Omega_\rho$, while the second estimate combined with 
 the fact that \eqref{203} holds at every boundary point allows one to prove by standard arguments
 (see for instance the proof of \cite[Proposition 2.4]{MS})
the $C^{2,\alpha'}$ regularity of $u$ in the whole $\Omega_\rho$.
\qed

\bibliographystyle{amsplain}

\end{document}